\DeclareFontFamily{OMX}{lmex}{}
\DeclareFontShape{OMX}{lmex}{m}{n}{<-> lmex10}{}  
\newcommand{\R}{\mathbb{R}}
\newcommand{\C}{\mathbb{C}}
\newcommand{\Z}{\mathbb{Z}}
\newcommand{\N}{\mathbb{N}}
\newcommand{\dd}{\mathrm{d}}
\renewcommand{\O}{\mathcal{O}}
\DeclareMathOperator{\tr}{tr}
\DeclareMathOperator{\dist}{dist}
\newtheoremstyle{indented}
{7pt} 
{7pt} 
{} 
{1.5em} 
{\bfseries} 
{.} 
{.5em} 
{} 
\theoremstyle{definition}
\newtheorem{defn}{Definition}[section]
\theoremstyle{plain}
\newtheorem*{theorem*}{Theorem}
\newtheorem{theorem}{Theorem}
\newtheorem{prop}[defn]{Proposition}
\newtheorem{lem}[defn]{Lemma}
\newtheorem{conj}{Conjecture}
\theoremstyle{definition}
\newtheorem{rem}[defn]{Remark} 
\renewcommand*\env@matrix[1][*\c@MaxMatrixCols c]{%
  \hskip -\arraycolsep
  \let\@ifnextchar\new@ifnextchar
  \array{#1}}
\title{Real-analytic geodesics in the Mabuchi space of Kähler metrics
  and quantization}
\author{Alix
  Deleporte\thanks{alix.deleporte@universite-paris-saclay.fr},
  \framebox{Steve
  Zelditch}\thanks{Steve Zelditch was
    affiliated to Northwestern University, at the department of
    Mathematics, until his death on Sept. 11, 2022. The first version
    of this article
    was mostly finished at this date.\\Mathematics Subject
    Classification 2020: primary 53E40, secondary 58J40, 81Q20}}
\affil{$^*$Universit\'e Paris-Saclay, CNRS, Laboratoire de math\'ematiques
  d'Orsay, F-91405, Orsay, France.
}
\newcommand{\todo}[1]{}
\newcommand{\old}[1]{}
\begin{document}

\maketitle

\begin{abstract}
  We prove the convergence of quantized Bergman geodesics to the
  Mabuchi geodesics for the initial value problem, in the case of
  real-analytic initial data and in short time. This partially solves
  a conjecture of Y. Rubinstein and the last author. We also argue
  against the existence of a solution to the boundary value problem,
  generically in real-analytic regularity.

  To this end, we introduce non-self-adjoint Fourier
      Integral Operators, and prove that they are satisfactory
    approximations to the Bergman geodesics, that is, solutions of a semiclassical
    Schrödinger equation with skew-adjoint Hamiltonian.
\end{abstract}

\section{Setting and main results}
\label{sec:setting}

\subsection{The Mabuchi metric}
\label{sec:mabuchi-metric}

Let $(M,J,\omega_0)$ be a Kähler manifold of complex dimension $d$. We let $\mathcal{H}$ denote
the space of $C^{1,1}$ changes of Kähler potentials on $(M,J,\omega_0)$, that is, the following
open subset of $C^{1,1}(M,\R)$:
\[
  \mathcal{H}=\{\phi\in C^{1,1}(M,\R),\omega_{\phi}:=\omega_0+i\partial\overline{\partial}\phi>0\}.\]
 We endow the infinite-dimensional space $\mathcal{H}$ with a Riemannian
metric named after Mabuchi 
\cite{mabuchi_symplectic_1987,semmes_complex_1992,donaldson_symmetric_1999}: at
$(\phi,v)\in T\mathcal{H}\simeq
\mathcal{H}\times C^{1,1}(M,\R)$, the squared norm of $v$ is
\begin{equation}
  \label{eq:Mabuchi-metric}
  \int_M|v|^2\omega_{\phi}^{\wedge d}.
\end{equation}
The geodesic equation associated to the Mabuchi metric is
\begin{equation}
  \label{eq:Mabuchi_geod_eq}
  \ddot{\phi}(t)=|\partial\dot{\phi}(t)|_t^2,
\end{equation}
where $|\cdot|_t$ is the norm on $TM$ induced by the Kähler metric
$(M,J,\omega_{\phi(t)})$ and $\partial$ is the holomorphic gradient.

The space $\mathcal{H}$ is negatively curved, and it is in fact
non-positive in the sense of Alexandrov \cite{calabi_space_2002}. If $v_1,v_2$ are normalised elements of
$T\mathcal{H}$ over the same base point $\phi$, the curvature element is
\begin{equation}
  \label{eq:curv-Mabuchi}
  K(v_1,v_2)=-\frac{1}{4}\int_M
  |\{v_1,v_2\}_{\phi}|^2\omega_{\phi}^{\wedge d}.
\end{equation}

The initial value problem associated with the PDE \eqref{eq:Mabuchi_geod_eq}
is, generally speaking, ill-posed. In this article, we will be concerned with
\emph{real-analytic} initial data, for which the conclusion of the
Cauchy-Kovalevskaya theorem applies \cite{mabuchi_symplectic_1987} so that
a local in time (but not global in time in general) solution exists. We
stress that this is the only situation in which a local existence
theorem is known; conversely, local existence is known to be false in $C^3$ \cite{rubinstein_cauchy_2017}.

Beginning with \cite{phong_monge-ampere_2006,phong_test_2007} there
has been a particular focus on the relationship between geodesics
in $\mathcal{H}$ and geodesics in the approximating spaces
$\mathcal{B}_k$ of \emph{Bergman metrics} of degree $k$, defined
  in
  Section \ref{sec:berez-toepl-quant}. The spaces $\mathcal{B}_k$ are
symmetric spaces of the form $SL(d_k, \C)/SU(d_k)$, where
the dimension $d_k$ tends to infinity with $k$, and they have their own geodesics, known
as Bergman rays. Two natural maps ${\rm Hilb}_k:\mathcal{H}\to
\mathcal{B}_k$ and ${\rm FS}_k:\mathcal{B}_k\to \mathcal{H}$ allow to
compare Mabuchi geodesics and Bergman rays with each other.

Since $SL(d_k,\C)/SU(d_k)$ is finite-dimensional,
Bergman rays can always be extended in infinite time. Every Bergman ray corresponds to a subsolution of
\eqref{eq:Mabuchi_geod_eq}, that is, for every $k\in \N$, if $A:\R\to \mathcal{B}_k$ is a
  geodesic and $\phi_k(t)={\rm FS}_k(A(t))$, then 
\cite{berndtsson_positivity_2009}
  \begin{equation}\label{eq:subsol}
    \ddot{\phi}_k(t)\geq  |\partial \dot{\phi}_k(t)|^2_t.
  \end{equation}
This result provides a natural way to construct
a subsolution of the initial value problem as the limsup of the
Bergman rays as $k\to +\infty$. This subsolution is always defined in infinite time,
which raises the question of its relationship with actual solutions
and motivates the study of Bergman metrics,

It was conjectured in \cite{rubinstein_cauchy_2011} that this subsolution
matches the actual solution of the initial value problem for \eqref{eq:Mabuchi_geod_eq} as long
as the latter exists; this conjecture also appears, in a different
form, in \cite{bhattacharyya_exponential_2022}. The main results of
this article (Theorems \ref{thr:closeness_geodesics_IVP} and \ref{thr:FIOs}) are a partial proof of this
claim, in the real-analytic case, by showing convergence for
small times if $\omega_0$ and $\dot{\phi}(0)$ are real-analytic and
describing the asymptotic structure of these geodesics as $k\to +\infty$.

\subsection{Holomorphic sections of line bundles}
\label{sec:berez-toepl-quant}
Suppose that $(M,J,\omega_0)$ is \emph{polarised}, that is,
$\omega_0\in H^2(M,2\pi\Z)$. There exists a (not
  necessarily unique) Hermitian line bundle
$(L,h_0)\to M$ with curvature $-i\omega_0$, called a prequantum line bundle.

For every $k\in \N$, the space of holomorphic sections
$H^0(M,L^{\otimes k})$ inherits a Hilbert space structure from the Hermitian metric
$h_0$ on $L$:
\[
  \|u\|_{0}^2:=\int_M \|u(x)\|_{h_0^{\otimes k}}^2\omega_0^{\wedge d}[\dd x].
\]
In this way, $H^0(M,L^{\otimes k})$ sits as a finite-dimensional
subspace of $L^2(M,L^{\otimes k})$. The associated orthogonal
projector $\Pi_k:L^2(M,L^{\otimes k})\to H^0(M,L^{\otimes k})$ is
called the Bergman projector.

Let $\phi\in \mathcal{H}$. The symplectic structure
\[
  \omega_{\phi}=\omega_0+i\partial\overline{\partial}\phi
  \]
belongs to the same cohomology class as $\omega_0$; if we perform the same construction with $\omega_{\phi}$ instead of
$\omega_0$, we find the same topological bundle $L$ with a different
Hermitian structure:
\[
  \|\cdot\|^2_{h_{\phi}}:=\|\cdot\|^2_{h_0}e^{-\phi}.
\]

In this way, one can naturally quantize an element of $\mathcal{H}$ into a
sequence of Hilbert space structures on the spaces
$H^0(M,L^{\otimes k})$, where the squared norm of an element $u$ is now
\begin{equation}\label{eq:Hilb}
\|u\|_{\phi}^2=\int_M\|u(x)\|_{h_{\phi}^{\otimes
    k}}^2\omega_{\phi}^{\wedge d}[dx].
\end{equation}
The new Kähler
structure $\phi$ then allows to define a new Bergman projector
$\Pi_k^{\phi}$.

We let $\mathcal{B}_k$ denote the set of
Hilbert structures (scalar products) on $H^0(M,L^{\otimes k})$, and we call ${\rm
  Hilb}_k$ the map $\phi\mapsto \|\cdot\|_{\phi}^2$.

Conversely, elements of $\mathcal{B}_k$ allow to define Kähler
structures on $M$ via a map ${\rm FS}_k$
defined as follows: let $H\in \mathcal{B}_k$ and let $(s_j)$ be an
orthonormal basis of $H$, then we define
\[
  {\rm FS}_k(H):x\mapsto \frac 1k \log \sum_j\|s_j(x)\|^2_{h_0}-d\frac{\log(k)}{k}.
\]

${\rm FS}_k$ is asymptotically a left
inverse to ${\rm Hilb}_k$; see Proposition \ref{prop:Tian-Zelditch}
for details.\footnote{The usual definition of ${\rm FS}_k$ does not
  contain the asymptotically vanishing $-d\log(k)/k$
  term. We choose this definition to compensate for the universal
  subleading term in ${\rm FS}_k({\rm Hilb}_k(\phi))$ in the usual definition.}

Given a ``reference'' element $H_0\in \mathcal{B}_k$, the set
  $\mathcal{B}_k$ can be identified with the space $S^{++}(H_0)$ of positive
  definite $H_0$-self-adjoint operators via the formula
  \begin{equation}\label{eq:S++}
    \langle u,v\rangle_H=\langle u,Av\rangle_{H_0}.
  \end{equation}
  Indeed, any $A\in S^{++}(H_0)$ defines a new Hilbert structure $H$ in
  this way, and from $H$ one uniquely recovers $A$ -- for instance, choosing an orthonormal basis $(e_j)$ for $H_0$,
  the matrix coefficients of $A$ are $(\langle
  e_j,e_k\rangle_{H})_{j,k}$. This identification allows to
  understand the tangent space $T_{H_0}\mathcal{B}_k$ as the space of
  $H_0$-self-adjoint operators on $H^0(M,L^{\otimes k})$. A natural
  Riemannian metric on $\mathcal{B}_k$ is then given by prescribing
  the norm of an element $(H,A)\in T\mathcal{B}_k$ as the
  Hilbert--Schmidt norm of $A$ as a $H$-self-adjoint operator:
\begin{equation}\label{eq:normBk}
  \|A\|_H^2=\tr(A^2).
\end{equation}

Geodesics, for this metric, are one-parameter families $\gamma(t)$ of
scalar products, such that 
\[
  \langle \cdot,\cdot\rangle_{\gamma(t)}=\langle \cdot,e^{tA}\cdot\rangle_{\gamma(0)},
\]
where $A$ is a $\gamma(0)$-self-adjoint operator. The curvature element
between normalised elements $A_1,A_2$ of $T\mathcal{B}_k$ over the same
base point is
\begin{equation}\label{eq:curvBk}
  K(A_1,A_2)=\frac 14\tr([A_1,A_2]^2).
\end{equation}
It is already somewhat pleasant to compare this formula with \eqref{eq:curv-Mabuchi}.

Given
$(\phi,v)\in T\mathcal{H}$, one can use \eqref{eq:Hilb} to compute
$\dd{\rm Hilb}_k(v)$: indeed, as $\epsilon\in \R$ tends to zero, given
$u_1,u_2\in H^0(M,L^{\otimes k})$, 
\begin{align*}
  \langle u_1,u_2\rangle_{{\rm Hilb}_k(\phi+\epsilon v)}&=\int_M\langle
  u_1(x),u_2(x)\rangle_{h_{\phi}^{\otimes k}}e^{-\epsilon k
  v}(\omega_{\phi}+i\epsilon \partial\overline{\partial}v)^{\wedge
                                                        d}\\
  &=\int_M\langle
  u_1(x),u_2(x)\rangle_{h_{\phi}^{\otimes
    k}}\left(\omega_{\phi}^{\wedge d}-\epsilon k v
    \omega_{\phi}^{\wedge d}+i\epsilon d \partial
    \overline{\partial}v\wedge \omega_{\phi}^{\wedge
    d-1}+O_k(\epsilon^2)\right)\\
  &=\langle u_1,u_2\rangle_{{\rm Hilb}_k(\phi)}+\epsilon\int_M\langle
  u_1(x),u_2(x)\rangle_{h_{\phi}^{\otimes
    k}}\left(-kv+\Delta_{\phi}v\right)\omega_{\phi}^{\wedge
    d}+O_k(\epsilon^2)\\
  =&\langle u_1,u_2\rangle_{{\rm Hilb}_k(\phi)}+\epsilon\langle
  u_1,\Pi_k^{\phi}[(-kv+\Delta_{\phi}v)u_2]\rangle_{{\rm Hilb}_k(\phi)}+O_k(\epsilon^2).
\end{align*}Introducing \emph{Berezin--Toeplitz operators}
\begin{equation}\label{eq:contra_Toep}
  L^{\infty}(M,\R)\ni v \to T_k^{\phi}(v):=\Pi_k^{\phi}v\Pi_k^{\phi}
\end{equation}
which are ${\rm Hilb}_k(\phi)$-self-adjoint operators, we obtain
\begin{equation}\label{eq:dHilbk}
  \dd{\rm Hilb}_k(v)= T_k^{\phi}\left(-kv+\Delta_{\phi}v)\right).
\end{equation}

The map ${\rm Hilb}_k$ between the infinite-dimensional space
$\mathcal{H}$ and the finite-dimensional space
$\mathcal{B}_k$ cannot be injective. However, it
approximately preserves, at the infinitesimal level, the Riemannian data (up to a
rescaling) thanks to the classical-quantum correspondence for
Berezin--Toeplitz operators.
\begin{prop}[\cite{bordemann_toeplitz_1994},\cite{charles_berezin-toeplitz_2003}]\label{prop:quantum_class_corresp}
  Let $\phi\in \mathcal{H}$ be such that
  $\omega_{\phi}\in C^{\infty}$ and let $v_1,v_2\in
  C^{\infty}(M,\R)$. Then
  \[
    \tr(T_k^{\phi}(v_1)^2)=\frac{k^d}{\pi^d}\int |v_1|^2\omega_{\phi}^{\wedge
      d}+O(k^{d-1}).
    \]
  \[
    \tr([T_k^{\phi}(v_1),T_k^{\phi}(v_2)]^2)=-\frac{k^{d-2}}{\pi^{d}}\int
    |\{v_1,v_2\}_{\omega_{\phi}}|^2\omega_\phi^{\wedge
      d}+O(k^{d-3}).\]
\end{prop}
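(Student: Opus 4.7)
The plan is to deduce both identities from three standard results in the semiclassical calculus for Berezin--Toeplitz operators on a smooth polarised Kähler manifold \cite{bordemann_toeplitz_1994}: the trace asymptotic $\tr T_k^{\phi}(f) = k^d \int_M f\, \omega_{\phi}^{\wedge d} + O(k^{d-1})$, the product expansion $T_k^{\phi}(f) T_k^{\phi}(g) = T_k^{\phi}(fg) + R_k$ with $\|R_k\|_{\mathrm{op}} = O(k^{-1})$, and the commutator expansion $[T_k^{\phi}(f), T_k^{\phi}(g)] = \frac{i}{k} T_k^{\phi}(\{f,g\}_{\omega_{\phi}}) + S_k$ with $\|S_k\|_{\mathrm{op}} = O(k^{-2})$. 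Since $(M,J,\omega_{\phi})$ is itself a smooth polarised Kähler manifold carrying the prequantum bundle $(L,h_{\phi})$, these classical estimates transfer directly to the $\phi$-dependent setting.

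The first identity is then immediate: the product expansion with $f = g = v_1$ gives $T_k^{\phi}(v_1)^2 = T_k^{\phi}(v_1^2) + R_k$, and since $\dim H^0(M,L^{\otimes k}) = O(k^d)$ any operator of norm $O(k^{-m})$ contributes $O(k^{d-m})$ to the trace, so the trace asymptotic applied to $v_1^2$ yields the claim.

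For the second identity, writing $A_k = \frac{i}{k} T_k^{\phi}(\{v_1,v_2\}_{\omega_{\phi}})$, one has $\|A_k\|_{\mathrm{op}} = O(k^{-1})$, and the expansion $(A_k + S_k)^2 = A_k^2 + A_k S_k + S_k A_k + S_k^2$ gives
\[
[T_k^{\phi}(v_1), T_k^{\phi}(v_2)]^2 = -\frac{1}{k^2} T_k^{\phi}(\{v_1,v_2\}_{\omega_{\phi}})^2 + O_{\mathrm{op}}(k^{-3}).
\]
Taking traces, the remainder contributes $O(k^{d-3})$, and applying the first identity with $v_1$ replaced by $\{v_1,v_2\}_{\omega_{\phi}}$ yields the stated expansion.

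The main obstacle is not algebraic. The three semiclassical ingredients above are standard for a fixed smooth $\phi$, so for the pointwise statement of the proposition they apply directly. The only potentially delicate point, which would become essential if one needed uniformity in $\phi$ across a family of potentials (as may well arise elsewhere in the paper), is to track the $\phi$-dependence of the remainders in the Bordemann--Meinrenken--Schlichenmaier expansion; for the statement as written, no such uniformity is required.
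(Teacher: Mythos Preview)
Your argument is correct and is exactly the standard derivation from the Bordemann--Meinrenken--Schlichenmaier calculus that the paper invokes. In fact the paper does not give its own proof of this proposition: it states the result with a citation to \cite{bordemann_toeplitz_1994} and uses it as input, so your write-up simply spells out what the paper leaves implicit.
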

Thus, up to a scale factor $C_dk^{\frac d2 + 1}$, the geometry
of $\mathcal{B}_k$ is presumed to reflect that of
$\mathcal{H}$: indeed, after using equation \eqref{eq:dHilbk}, the
norms \eqref{eq:Mabuchi-metric}\eqref{eq:normBk} and the curvature elements
\eqref{eq:curv-Mabuchi}\eqref{eq:curvBk} match up to a relative error $O(k^{-1})$. Using this fact, it was proved that the image by
${\rm Hilb}_k$ of geodesics almost solve the geodesic equation.

\begin{prop}\label{prop:sung3.5}[\cite{chen_space_2012}, Proposition
  3.5] Let $t\mapsto \phi(t)$ denote a geodesic path of smooth Kähler structures in $\mathcal{H}$. For
  every $t$ in the domain of this path, let
  $c_k(t)={\rm Hilb}_k(\phi(t))$. Then, as $k\to +\infty$, the curve $c_k$
  almost satisfies the geodesic equation on $\mathcal{B}_k$:
  \[
    \|\nabla_{\dot{c}_k}\dot{c}_k\|=o(k^{\frac d2 + 1}).
    \]
  \end{prop}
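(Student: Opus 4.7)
The plan is to compute $\nabla_{\dot c_k}\dot c_k$ explicitly as a self-adjoint operator, then verify using the Berezin--Toeplitz calculus and the Mabuchi geodesic equation that its trace norm is $o(k^{\frac{d}{2}+1})$.

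I would first identify velocities and accelerations in the self-adjoint picture. Realise $\mathcal{B}_k$ as the space of positive operators $M$ on $H^0(M,L^{\otimes k})$ relative to a reference inner product $c_k(0)$, with $c_k(t)(\cdot,\cdot)=c_k(0)(M(t)\cdot,\cdot)$. By \eqref{eq:dHilbk}, the $c_k(t)$-self-adjoint operator $A_k(t):=M(t)^{-1}\dot M(t)$ corresponding to $\dot c_k(t)$ is the Toeplitz operator
\[
A_k(t) = T_k^{\phi(t)}\bigl(k\dot\phi(t)+\Delta_{\phi(t)}\dot\phi(t)\bigr).
\]
A direct calculation in the symmetric space $\mathcal{B}_k\cong GL(d_k,\C)/U(d_k)$ shows that the covariant acceleration corresponds, in this picture, to the $c_k(t)$-self-adjoint operator $\dot A_k(t)$ (the symmetric-space correction $\dot M M^{-1}\dot M=MA_k^{2}$ cancels the $A_k^{2}$ appearing in $M^{-1}\ddot M$), so the target estimate becomes
\[
\tr\!\bigl(\dot A_k(t)^2\bigr)=o\!\bigl(k^{d+2}\bigr).
\]

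Next, I would differentiate $A_k(t)$ in $t$. Two contributions appear: one from the time-dependence of the symbol, and one from the variation of the Toeplitz quantisation with $\phi$:
\[
\dot A_k(t) = T_k^{\phi}\bigl(k\ddot\phi+\tfrac{d}{dt}(\Delta_\phi\dot\phi)\bigr) + (\partial_\phi T_k^\phi)[\dot\phi]\bigl(k\dot\phi + \Delta_\phi\dot\phi\bigr).
\]
Inserting the Mabuchi equation \eqref{eq:Mabuchi_geod_eq}, the first term reads $k\,T_k^\phi(|d\dot\phi|_\phi^2)$ to leading order. The variation $(\partial_\phi T_k^\phi)[\dot\phi]$ can be expanded from the variational formula for the Bergman projector $\Pi_k^\phi$, tracking the change of the Hermitian metric $h_\phi=h_0+\phi$ and of the volume form $\omega_\phi^{\wedge d}$; the leading-order combination of these two variations is exactly what gives rise to the factors $k$ and $\Delta_\phi$ in \eqref{eq:dHilbk}, and carrying the expansion one further order produces a contribution $-k\,T_k^\phi(|d\dot\phi|_\phi^2)$ that cancels the Mabuchi term.

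After cancellation, $\dot A_k$ is a Toeplitz operator with symbol of order $O(1)$ in $k$, so by Proposition~\ref{prop:quantum_class_corresp} its trace norm squared is $O(k^d)=o(k^{d+2})$, giving the claim. The main obstacle is the cancellation step: one must compute $\partial_\phi T_k^\phi$ one order beyond \eqref{eq:dHilbk}, and verify that the $k^2$-scale contribution produced has precisely the sign and magnitude needed to annihilate the Mabuchi nonlinearity $k|d\dot\phi|_\phi^2$. This matching is the infinitesimal expression of the fact that ${\rm Hilb}_k$ is an almost-isometric immersion up to a scaling factor $k^{d/2+1}$, and it underlies, at the next order, the agreement of the curvature formulas \eqref{eq:curv-Mabuchi} and \eqref{eq:curvBk}.
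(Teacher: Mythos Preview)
The paper does not give its own proof of this proposition; it is quoted verbatim from \cite{chen_space_2012}, Proposition 3.5, and invoked as background. So there is no argument in the paper to compare against, and the question is simply whether your outline is sound.

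Your overall strategy is the right one and is essentially the standard argument: identify the covariant acceleration with $\dot A_k$, where $A_k=M^{-1}\dot M=T_k^{\phi}(k\dot\phi+\Delta_\phi\dot\phi)$, and show that the order-$k$ contributions to $\dot A_k$ cancel by virtue of the Mabuchi equation, leaving a Toeplitz operator with $O(1)$ symbol and hence Hilbert--Schmidt norm $O(k^{d/2})$. The identification of $\nabla_{\dot c_k}\dot c_k$ with $\dot A_k$ in the symmetric-space picture is correct.

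Where your sketch becomes shaky is precisely the step you flag as the ``main obstacle''. You propose to compute $(\partial_\phi T_k^{\phi})[\dot\phi]$ by expanding the variation of $\Pi_k^{\phi}$ and appealing to \eqref{eq:dHilbk}; but \eqref{eq:dHilbk} records the variation of the \emph{inner product} on $H^0(M,L^{\otimes k})$, which involves no projector whatsoever, so that formula does not describe the leading variation of $T_k^{\phi}$. A direct analysis of $\partial_\phi\Pi_k^{\phi}$ is possible but unnecessary. A much cleaner route is to differentiate the defining relation $\langle T_k^{\phi}(f)u,v\rangle_\phi=\int_M f\,u\bar v\,\dd\mu_\phi$ (with $\dd\mu_\phi=e^{\sigma}\dd\mu_0$, $\dot\sigma=\pm(k\dot\phi+\Delta_\phi\dot\phi)$); one line gives
\[
(\partial_\phi T_k^{\phi})[\dot\phi](f)=T_k^{\phi}(f\dot\sigma)-T_k^{\phi}(\dot\sigma)\,T_k^{\phi}(f),
\]
so that, with $f=\dot\sigma$,
\[
\dot A_k=T_k^{\phi}(\ddot\sigma)+\bigl[T_k^{\phi}(\dot\sigma^{\,2})-T_k^{\phi}(\dot\sigma)^2\bigr].
\]
The bracket is handled by the subprincipal product formula (Proposition~\ref{prop:calc_Toep_subp} together with \eqref{eq:Charles_2}): it equals $k^{-1}T_k^{\phi}(|d\dot\sigma|_\phi^2)+O(1)=kT_k^{\phi}(|d\dot\phi|_\phi^2)+O(1)$, and this cancels $T_k^{\phi}(\ddot\sigma)=-kT_k^{\phi}(|d\dot\phi|_\phi^2)+O(1)$ once the Mabuchi equation is inserted (with the natural sign convention $\sigma=-k\phi+\log(\omega_\phi^{\wedge d}/\omega_0^{\wedge d})$). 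Thus the cancellation you seek is exactly the Toeplitz product rule at subprincipal order, and no separate expansion of the Bergman projector is needed.
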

 In this article, we study the
initial value problem before and after application of ${\rm Hilb}_k$, and prove that the
distance between the projected geodesic and the actual geodesic is
small. Unfortunately, because the spaces $\mathcal{H}$ and
$\mathcal{B}_k$ are negatively curved with very large or unbounded curvature, one cannot apply a Grönwall-type
 lemma, in the spirit of \cite{jost_riemannian_2008}, Corollary 4.6.1,
 to prove this claim using only Proposition
 \ref{prop:sung3.5}. Nevertheless, under hypotheses of
 real-analyticity, we are able to prove closeness of the two geodesics for short times.

 \begin{theorem}\label{thr:closeness_geodesics_IVP}
   Suppose that $\omega_0$ is
  real-analytic and let $\dot{\phi}_0\in C^{\omega}(M,\R)$. Let
  $\phi(t)$ denote the geodesic in $\mathcal{H}$ with initial value
  $(0,\dot{\phi}_0)$, which is well-defined for short time. For all $k\in
  \N$, let
  \[
    c_k:t\mapsto {\rm Hilb}_k(\phi(t)).
  \]
  Let also $\gamma_k(t)$ be the geodesic on $\mathcal{B}_k$ with
  initial value $({\rm Hilb}_k(0),\dd {\rm Hilb}_k(\dot{\phi}_0))=(c_k(0),\dot{c}_k(0))$.
  Then there exists $t_0>0$ and $C>0$ such that, uniformly on $t\in [0,t_0]$, as
  $k\to +\infty$, one has
  \begin{align}
    \dist_{\mathcal{B}_k}(c_k(t),\gamma_k(t))&\leq Ck^{\frac
                                               d2} \label{eq:close-geods-Bk} \\
    \label{eq:close-geods-FSk}
    \dist_{\mathcal{H}}(\phi(t),{\rm FS}_k(\gamma_k(t)))&\leq C k^{-1}.
  \end{align}
\end{theorem}
Recalling from Proposition \ref{prop:quantum_class_corresp} that the
natural scaling between the distances on $\mathcal{B}_k$ and
$\mathcal{H}$ is $k^{-\frac d2 - 1}$, so that
 \eqref{eq:close-geods-Bk} and \eqref{eq:close-geods-FSk} express the
same magnitude of distances between the considered objects.
 


Theorem \ref{thr:closeness_geodesics_IVP} is a 
consequence of a more technical result about the \emph{integral kernels} of
$c_k(t)$ and $\gamma_k(t)$ which is interesting in its own right and
which we describe now. Recall from \eqref{eq:S++} that both
$c_k(t)$ and $\gamma_k(t)$ can be cast as positive symmetric operators
on $H^0(M,L^{\otimes k})$ endowed with the scalar product ${\rm
  Hilb}_k(\phi(0))$. We prove that $c_k(t)$ and $\gamma_k(t)$ have
integral kernels that are relatively close to
each other.

\begin{theorem}\label{thr:FIOs}
  Suppose that $\omega_0$ is
  real-analytic and let $\dot{\phi}_0\in C^{\omega}(M,\R)$. Let
  $\phi(t)$ denote the geodesic in $\mathcal{H}$ with initial value
  $(0,\dot{\phi}_0)$, which is well-defined for short time and let
  $c_k(t)={\rm Hilb}_k(\phi(t))$. Let
  $\gamma_k(t)$ be the geodesic on $\mathcal{B}_k$ with initial value
  $({\rm Hilb}_k(0),{\rm d}{\rm Hilb}_k(\dot{\phi}_0))$. Then there
  exists
  \begin{itemize}
  \item a time $t_0>0$
  \item an open neighbourhood $V$ of the diagonal of $M\times
    M$
  \item for each $t\in [-t_0,t_0]$, a non-vanishing holomorphic section $\Phi(t)\in
    L\boxtimes \overline{L}$ over $V$, with analytic dependence on $t$ (see
    Definition \ref{def:boxtimes} of the line bundle $L\boxtimes \overline{L}$)
  \item two uniformly bounded sequences of real-analytic functions 
    $(a_k(t))_{k\in \N},(b_k(t))_{k\in \N}$ on $V$
  \item constants $C>0$, $c>0$
  \end{itemize}
  such that for every $t\in [-t_0,t_0]$, for every $u,v\in
  H^0(M,L^{\otimes k})$, one has
  \begin{align}
    \label{eq:ck_FIO_intro}
    \left|\langle u,v\rangle_{c_k(t)}-k^d\int_V \langle \Phi(t,x,y)^{\otimes
    k},\overline{u(x)}\otimes v(y)\rangle_{h_0^{\otimes k}\times
    \overline{h_0}^{\otimes k}}a_k(t,x,y)\omega_0^{\wedge d}(\dd x)
    \omega_0^{\wedge d}(\dd y)\right|&\leq Ce^{-ck}\|u\|_{c_k(t)}\|v\|_{c_k(t)}\\ \label{eq:gammak_FIO_intro}
   \left|\langle u,v\rangle_{\gamma_k(t)}-k^d\int_V \langle \Phi(t,x,y)^{\otimes
    k},\overline{u(x)}\otimes v(y)\rangle_{h_0^{\otimes k}\times
    \overline{h_0}^{\otimes k}}b_k(t,x,y)\omega_0^{\wedge d}(\dd x)
    \omega_0^{\wedge d}(\dd y)\right|&\leq Ce^{-ck}\|u\|_{c_k(t)}\|v\|_{c_k(t)}.
  \end{align}
\end{theorem}

The proof of Theorem
\ref{thr:FIOs}, and of the link with Theorem \ref{thr:closeness_geodesics_IVP}, necessitates a calculus (approximate composition and
inversion) of families of operators on $H^0(M,L^{\otimes k})$ with
integral kernels (in the sense given by \eqref{eq:ck_FIO_intro} or \eqref{eq:gammak_FIO_intro}) of the form
\[
  (x,y)\mapsto k^d\Phi(x,y)^{\otimes k}a_k(x,y),
\]
which we will call \emph{analytic Fourier Integral Operators}. We
develop this theory in Section \ref{sec:analyt-four-integr}.

Equation \eqref{eq:ck_FIO_intro} boils down to the structure of the
Szeg\H{o} projector $\Pi_k^{\phi}$, which in the analytic case is
now well-known to exponential precision \cite{rouby_analytic_2018,deleporte_toeplitz_2018,hezari_off-diagonal_2017,charles_analytic_2021,deleporte_direct_2022}. Equation \eqref{eq:gammak_FIO_intro} is new: it
concerns the integral kernel of an operator of the form
$e^{tkT_k(v)}$, where $v=-\dot{\phi}(0)+\frac{1}{k}\Delta
\dot{\phi}(0)$. It is known that the \emph{imaginary time} kernel
$e^{itkT_k(v)}$ has the structure of a complex Fourier Integral operator
\cite{zelditch_pointwise_2018,charles_quantum_2020}, at least up to $O(k^{-\infty})$; in the limit $k\to +\infty$
one recovers the Hamilton flow of $-\dot{\phi}(0)$, but usual
($C^{\infty}$) techniques are limited to the analysis of unitary
operators. A semiclassical analysis of the kernel of non-unitary
propagators was only previously known in
the case of quadratic symbols \cite{hormander_symplectic_1995,pravda-starov_generalized_2018}.

Formula \eqref{eq:gammak_FIO_intro} is expected to generalise to
propagators associated with more
general non-self-adjoint Berezin--Toeplitz operators, with, hopefully,
interesting consequences for the description of the dynamics and the spectrum
of these operators.

\subsection{The boundary value problem}
\label{sec:boundary_value}

The boundary value problem for \eqref{eq:Mabuchi_geod_eq}, namely the
problem of finding a geodesic on $\mathcal{H}$ with fixed endpoints,
is seemingly better behaved than the initial value problem. In fact, this
problem is formally equivalent to an elliptic nonlinear boundary value problem,
the Homogeneous Complex Monge-Ampere (HCMA) equation
\cite{donaldson_symmetric_1999}. It was progressively proved \cite{chen_space_2000,blocki_geodesics_2012,chu_c11_2017} that
any two points in $\mathcal{H}$ (i.e. $C^{1,1}$ Kähler metrics in the
same cohomology class) are joined by a unique shortest length
geodesic of $C^{1,1}$ metrics (in the HCMA sense).
Weak convergence of the Bergman
rays (with projected boundary values by ${\rm Hilb}_k$) to the
geodesic was proved in \cite{phong_monge-ampere_2006} and strong convergence in
\cite{chen_space_2012}. As a byproduct, the distances and angles on
$\mathcal{H}$ are asymptotically preserved by the map
${\rm Hilb}_k$. In these results, the main ingredient from asymptotic
analysis is the study of the properties of the kernel of the Bergman
projector $\Pi_k^{\phi}$ on and the diagonal as $k\to +\infty$
(Propositions \ref{prop:Tian-Zelditch} and \ref{prop:calc_Toeplitz}).

As the regularity increases, the boundary value problem becomes harder
to solve. Generically, two $C^k$ metrics can be joined by a
$C^{\frac{3k}{4}+1}$ geodesic for $k\geq 5$ \cite{chen_geodesic_2020}, but there exist
smooth metrics which cannot be joined by a $C^2$ geodesic
\cite{lempert_geodesics_2013} and there exist arbitrarily close
analytic metrics which cannot be joined by a smooth geodesic
\cite{hu_obstacle_2021}. It is conjectured in
\cite{chen_geodesic_2020} that smooth metrics close to each other are
generically joined by a smooth metric. Because of the link with the
space of Hamiltonian diffeomorphisms, we make a conjecture in the
opposite direction for analytic metrics.

\begin{conj}\label{conj:no_analytic_bd_value}
  Let $E$ and $E'$ be two analytic function spaces on $M$ (i.e. Banach
  spaces of analytic functions containing all analytic functions with
  sufficiently small radius of injectivity).

  Then there exists an open dense subset of
  $(E\cap \mathcal{H})^2$, in which no pair of elements is linked by a geodesic in $E'$.
\end{conj}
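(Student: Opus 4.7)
The plan is to show that the set $B=\{(\phi_0,\phi_1)\in (E\cap\mathcal{H})^2:\phi_0,\phi_1 \text{ are linked by an }E'\text{-geodesic}\}$ is nowhere dense in $(E\cap\mathcal{H})^2$. I would approach this in two steps: first argue that $B$ is contained in a closed set, then prove that this closed set has empty interior. Closedness would come from a compactness argument in the analytic category: if $(\phi_0^n,\phi_1^n)\to(\phi_0,\phi_1)$ in $E\times E$ and each is joined by an $E'$-geodesic $\phi^n$, one would like to extract a limit using Montel-type properties of spaces of analytic functions. The snag is that one has no a priori control of the $E'$-norm of $\phi^n$ in terms of $E$-norms of the boundary data—indeed, as Chen's theorem only guarantees $C^{1,1}$ regularity of geodesics, such an a priori estimate is precisely what is known to fail. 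So closedness is itself a nontrivial point, and one may have to replace $B$ by a slightly enlarged closed set, e.g., the closure in $(E\cap\mathcal{H})^2$, and show that \emph{that} set has empty interior.

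For the empty-interior step, the key structural input is the Semmes–Donaldson reformulation: an $E'$-regular geodesic $\phi(t)$ from $\phi_0$ to $\phi_1$ corresponds to an $E'$-solution of the HCMA equation on $M\times S^1\times[0,1]$, and thus to a foliation of this strip by analytic holomorphic discs with boundary data encoded by $\phi_0$ and $\phi_1$. The foliation is entirely determined by $\phi_0$ together with the initial tangent vector $\dot\phi(0)$, so the set of $\phi_1$ reachable from a fixed $\phi_0$ via an $E'$-geodesic coincides with the image of an analytic exponential-type map $\exp_{\phi_0}:E'\cap T_{\phi_0}\mathcal{H}\supset U\to E\cap\mathcal{H}$. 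The conjecture then reduces to showing that, for $\phi_0$ in an open dense subset of $E\cap\mathcal{H}$, the image of $\exp_{\phi_0}$ has empty interior in $E\cap\mathcal{H}$.

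To establish this one would exploit the Hamiltonian–complexification picture: an analytic geodesic gives a complexification of the Hamiltonian isotopy generated by $\dot\phi(t)$ acting on $(M,\omega_{\phi(t)})$, so $\omega_{\phi_1}$ lies in the "orbit" of $\omega_{\phi_0}$ under an analytic \emph{complex} Hamiltonian semigroup acting by maps $(M,J)\to(M,J)$. For a generic analytic Hamiltonian, this complex flow does not close up at time $1$ as a real symplectomorphism, essentially for the same reason that the flow of a generic analytic vector field on a complex manifold develops singularities in finite imaginary time; this is in the spirit of the explicit obstructions constructed in \cite{hu_obstacle_2021}. The idea would be to produce, near any prescribed $(\phi_0,\phi_1)$, an analytic perturbation of $\phi_1$ that violates the compatibility needed for such a complex-Hamiltonian endpoint relation, thereby leaving $B$.

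The main obstacle is this last point: showing the image of $\exp_{\phi_0}$ has empty interior in an infinite-dimensional analytic topology. Sard-type tools are unavailable, so one is forced to construct explicit perturbations and certify that no other choice of initial velocity $\dot\phi(0)\in E'$ produces the perturbed endpoint—essentially a rigidity/unique continuation statement for the HCMA foliation read from boundary data. A second, more technical difficulty is that $E'$ is allowed to be \emph{any} analytic function space, so the perturbations must destroy not merely the $E'$ regularity but the existence of \emph{any} analytic geodesic; this suggests that the cleanest route is to work intrinsically with analytic holomorphic disc foliations and their moduli, rather than with a fixed Banach scale of analytic potentials.
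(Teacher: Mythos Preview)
First, note that the statement is a \emph{conjecture}: the paper does not prove it either. Section~\ref{sec:geom-space-quant} offers only a heuristic argument and explicitly names a ``missing link'' that remains open. So the gaps you identify in your own outline are genuine, and the paper's argument has its own.

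That said, your heuristic and the paper's differ substantively. The paper's central mechanism is the observation that a Mabuchi geodesic, read through the associated complex Lagrangians $\mathcal{L}(t)$, corresponds after rotation to imaginary time to an \emph{autonomous} Hamiltonian flow on $M$, and autonomous flows are non-generic among time-dependent Hamiltonian flows by a dimension count: periodic points of an autonomous flow form a $1$-dimensional set (closed orbits), whereas for a generic time-dependent flow they are isolated. This gives a concrete closed-with-empty-interior statement on the side of Hamiltonian diffeomorphisms. The paper then tries to transport this non-genericity back to real Kähler potentials via a candidate map built from Berezin--Toeplitz quantization, roughly $\lim_{k\to\infty}{\rm FS}_k\circ\mathcal{J}\circ{\rm Hilb}_k$ where $\mathcal{J}$ is the exponentiated multiplication-by-$i$ on tangent spaces of $SL(d_k,\C)/SU(d_k)$; the admitted gap is that this limit is only known to behave well \emph{along} an analytic geodesic (by Theorems~\ref{thr:geods-Hilb_k} and~\ref{thr:geods_FS_k}), not for nearby potentials. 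The paper also reduces the problem to \emph{short-time} non-genericity via a step-by-step argument along a partition $0<\delta<2\delta<\cdots<1$.

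Your proposal, by contrast, attacks the image of $\exp_{\phi_0}$ directly and seeks explicit perturbations of $\phi_1$ outside it, invoking the Hu--Varolin obstructions and unique continuation for HCMA foliations. But without the autonomous/non-autonomous dichotomy you have no concrete criterion distinguishing reachable from unreachable endpoints; you flag this yourself. The paper's route is more specific---it tells you \emph{which} geometric property to perturb (the dimension of the periodic-point set of the associated flow)---and it bypasses your closedness worry by working locally in short time where the potential-to-Lagrangian map is a diffeomorphism. The difficulty common to both approaches is the same, however: passing a non-genericity statement from Hamiltonian flows or complex Lagrangians back to Kähler potentials in an analytic Banach topology. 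Neither argument closes that gap.
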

This conjecture has for immediate consequence (by
choosing a countable sequence of analytic function spaces $E'$
containing all analytic functions) that there exists a countable
intersection of open dense sets in $(E\cap \mathcal{H})^2$ of points not
linked to each other by an analytic geodesic. In this sense, the
opposite of the conjecture in \cite{chen_geodesic_2020} would hold for real-analytic metrics.

Because of the apparent loss of a fraction $\frac{1}{4}$ of
derivatives appearing in \cite{chen_geodesic_2020}, we conjecture that
generic (in some sense) analytic metrics are linked by a
$\frac 43$-Gevrey geodesic.

\subsection{Techniques and perspectives}
\label{sec:techniques}

We emphasize that there are many formal arguments indicating that
Theorems \ref{thr:closeness_geodesics_IVP} and \ref{thr:FIOs} should be true. The essential problem in this article is that the
formal arguments are based on approximate propagators, or parametrices. It is often
straightforward to prove a somewhat formal convergence for these approximations, but the
degree of precision of the parametrix, even in the smooth case, is not
enough. More precisely, we wish to obtain a good description of the
integral kernel of the Bergman geodesic
\[
  U(t,x,y)=e^{tkT_k(-v+k^{-1}\Delta v)}(x,y).
\]
This is the analytic continuation in $t$ of the Schrödinger propagator
\[
  U(i\tau,x,y)=e^{i\tau kT_k(-v+k^{-1}\Delta v)}(x,y),
\]
which is well-understood as a \emph{Fourier Integral Operator}
\cite{zelditch_pointwise_2018,charles_quantum_2020} if the initial
data is smooth. However, the precision of this description is
$O(k^{-\infty})$, whereas exponential precision $O(e^{-ck})$, for some
$c>0$, is needed for our purposes. One reason for this is the Duhamel
formula, associated with the fact that for $t\in \R$ one has
\[
  \log\|U(t)\|_{H^0\to H^0}\sim_{k\to +\infty} k\max(t\sup(v),t\inf(v)).
\]
An exponential level of precision is only
available in special cases, including (for short times) in the
real-analytic case, using the recently developed framework of
Berezin--Toeplitz quantization in real-analytic regularity
\cite{rouby_analytic_2018,deleporte_toeplitz_2018,hezari_off-diagonal_2017,charles_analytic_2021,deleporte_direct_2022}.

Our proof of Theorem \ref{thr:FIOs} uses a representation of
both $c_k(t)$ and $\gamma_k(t)$ as Fourier Integral Operators
with complex, real-analytic phase. The crucial point is
that, in this representation, both $c_k(t)$ and
$\gamma_k(t)$ have the same canonical relation (the same $\Phi$). In particular, we
interpret a real-analytic change of Kähler structure $\phi$ on $M$ as a
biholomorphism $\mathcal{L}$ between
neighborhoods of $M$ in its complexification; a path of Kähler
structures $\phi(t)$ is a path of biholomorphisms $\mathcal{L}(t)$. The image of a geodesic for the Mabuchi metric is
then a piece of one-parameter subgroup of biholomorphisms. This
interpretation (which, formally, stems from Proposition \ref{prop:quantum_class_corresp}) was one of the motivations for the introduction of
Berezin--Toeplitz quantization in the treatment of the Mabuchi
problem.

Conjecture \ref{conj:no_analytic_bd_value} is also linked to the
interpretation of changes of Kähler structures as complex
symplectomorphisms. Geodesics correspond to autonomous Hamiltonian
flows, and to support this claim, we use the
fact that, among real Hamiltonian diffeomorphisms, the autonomous ones
are non-generic.

The link between the geometry of Mabuchi space and that of Hamiltonian
diffeomorphisms (where geodesics are autonomous flows) uses a complexification argument. Thus, it is not
surprising that, as the regularity of the data increases, the
behaviours of the two problems become closer.

This link is also interesting from the perspective of optimal
transport. Mabuchi geodesics, that is, solutions of the complex
Monge-Ampère equation, are instances of time-dependent, $\omega$-preserving maps from the
complexification of $M$ to itself; they form a totally real
subspace of this space $\mathcal{H}^{\C}$, and a complementary subspace is formed by
$\omega$-preserving maps from $M$ to itself. This mimics the usual
optimal transport situation where optimal transport maps, solving a
real Monge-Ampère equation, are ``transverse'' to
solutions of the incompressible Euler equation inside a larger space of maps. This parallel suggests
several relevant questions: is there a ``polar decomposition'' where a
general map is the composition of a change of Kähler metric and a
real Hamiltonian diffeomorphism?
Since the boundary value problem for the geodesic equation among real
Hamiltonian differomorphisms is not well-posed, can one suitably
generalise the problem in order to find a minimal length geodesic?
Some of these questions were already addressed in
\cite{donaldson_holomorphic_2002}; in real-analytic regularity we hope
to be able to develop this theory.

In Section \ref{sec:analyt-toepl-oper} we recall the basic ingredients
from (complexified) Kähler geometry and semiclassical analysis in
real-analytic regularity which we will need. We use in particular the
structure of the Bergman kernel in real-analytic regularity
(Proposition \ref{prop:calc_Toeplitz}). Section
\ref{sec:analyt-four-integr} develops basic tools for the treatment of
non-unitary Fourier Integral Operators. One can compose and invert
such operators, and change the reference Kähler metric with respect to
which they are defined, as long as all involved objects are close to
the ``classical'' case of the Bergman kernel. Then in Section
\ref{sec:gronw-lemma-appr} we use these tools to prove our main
claims; we notably prove in Proposition \ref{prop:propag_is_FIO} that
quantum propagators of skew-adjoints operators are Fourier Integral
Operators.

We conclude this introduction with a remark about positivity and canonical bundles:
letting $K$ be the canonical bundle over $M$, another convention for
Berezin--Toeplitz operators and the ${\rm Hilb}_k$ and ${\rm FS}_k$
maps, is to consider holomorphic sections of $L^{\otimes k}\otimes
K$ rather than $L^{\otimes k}$. In terms of semiclassical analysis, the difference between the two
cases is subprincipal, and although the
particular identities (notably Proposition \ref{prop:calc_Toep_subp})
that we use here should be modified, our main three claims should also
hold in the case of a twist by the canonical bundle (or, in fact, any
fixed line bundle). A notable difference, and the reason why we focus
on the untwisted case, is that the property that Bergman geodesics are
subsolutions of the Mabuchi equation (equation \eqref{eq:subsol}) is specific to the untwisted
case. In fact, there is a reverse inequality in the canonical twist case,
where the image by ${\rm Hilb}_k$ of a Mabuchi geodesic is a
sub-$\mathcal{B}_k^{\rm twist}$
geodesic \cite{berndtsson_positivity_2009}. Our end goal is to study
natural candidates for Mabuchi geodesics after explosion, and
unfortunately the liminf of the twisted Bergman geodesics is
not, in general, a super-solution of \eqref{eq:Mabuchi-metric}. 


\section{Semiclassical analysis of Berezin-Toeplitz operators}
\label{sec:analyt-toepl-oper}

\subsection{Polarisation, complexification, and
  complex symplectic geometry}
\label{sec:polar-compl-assoc}

Recall that $(M,J,\omega_0)$ is a real-analytic Kähler manifold: $J$
is a complex structure, $\omega_0$ is a symplectic form which is real-analytic in the
$J$-holomorphic charts, and $\omega_0(\cdot,J\cdot)$ is a Riemannian
metric. Recall also that there is a line bundle $L\to M$ and a
Hermitian metric $h_0$ on $L$ such that
${\rm curv}(h_0)=-i\omega_0$. Associated with $(L,h_0)$ is
a Hermitian connection $\nabla$, whose curvature is also
$-i\omega_0$.

Concretely, in holomorphic charts for $L$
(in which holomorphic sections of $L$ are simply holomorphic
$\C$-valued functions), given $v\in L$ one has
$\|v\|_{h_0}=e^{-\frac{\psi_0(x)}{2}}|v(x)|$ and given a section $s$ and $(x,\dot{x})\in
TM$, one has $\nabla s(\dot{x})=\frac{\dd}{\dd \dot{x}}(e^{\frac{-\psi_0}{2}}s)e^{\frac{\psi_0}{2}}$,
where $\psi_0$ is a local Kähler potential: $\partial \overline{\partial}\psi_0=-i\omega_0$. 

Starting with this data, we will construct several manifolds and
bundles. These definitions serve two goals: first, to properly define
the association between integral kernels and operators as used in
Theorem \ref{thr:FIOs}; second, to give a geometric interpretation of
the phase $\Phi$ appearing in Theorem \ref{thr:FIOs}.

\begin{defn}\label{def:Mbar}Define $\overline{M}$ as the Kähler manifold
$(M,-J,-\omega_0)$, with flipped complex and symplectic
structure. Observe that $-\omega_0(\cdot,-J\cdot)=\omega_0(\cdot,J\cdot)$
so that the Riemannian structure on $\overline{M}$ is identical to
that on $M$. The natural line bundle $\overline{L}$ over
$\overline{M}$ is constructed as follows: given a $J$-holomorphic atlas
of $M$ and
(holomorphic) transition charts for $L$, the same atlas is
$-J$-holomorphic for $\overline{M}$ and we set the transition charts
of $\overline{L}$ to be the complex conjugate of that for $L$. In this
way, $\overline{L}$ is indeed a holomorphic bundle over
$\overline{M}$.

There is an antilinear correspondence between $H^0(M,L)$ and
$H^0(\overline{M},\overline{L})$: associate $s$ with $\overline{s}$ in each chart. Using this
correspondence we can provide $\overline{L}$ with a Hermitian metric
$h_{0,\overline{L}}$ and a Hermitian connection. One recovers then
${\rm curv}(\overline{L},h_{0,\overline{L}})=i\omega_0$, so that
$\overline{L}$ is a prequantum bundle over $\overline{M}$.
\end{defn}

\begin{defn}\label{def:boxtimes}
  The line bundle $L\boxtimes \overline{L}$ over $M\times
  \overline{M}$ is the holomorphic line bundle whose fibre over a point $(x,y)\in
  M$ is $L_x\otimes \overline{L_y}$. 
\end{defn}

The product manifold $M\times \overline{M}$ is particularly
interesting for several reasons. First of all, for the line bundle
$L\boxtimes \overline{L}$ over $M\times \overline{M}$, the holomorphic sections will be integral kernels of operators acting on $H^0(M,L)$. Moreover,
the diagonal in $M\times \overline{M}$ is a copy of $M$ which is a
maximally totally real submanifold of $M\times
\overline{M}$. Consequently, real-analytic data on $M$ can be extended
in a unique way into holomorphic objects in small neighbourhoods of the diagonal in
$M\times \overline{M}$. We can therefore define new geometric structures
on neighbourhoods of the diagonal in $M\times \overline{M}$.

\begin{defn}\label{def:Mtilde}Define $\widetilde{M}$ as a small
  neighbourhood of the diagonal in $M\times \overline{M}$, endowed
  with the complex structure $I=(J,-J)$.

We also extend the Hermitian line bundle $(L,h_0)$ and the
connection $\nabla$ into a $\C$-bundle
$(\widetilde{L},\widetilde{h_0})$ and a Hermitian connection $\widetilde{\nabla}$ over
$\widetilde{M}$, by first extending the Kähler potentials $\psi_0$ in charts into
$\widetilde{\psi_0}$ and then setting
$\|v\|_{\widetilde{h_0}}=|e^{-\frac 12\widetilde{\psi_0}(x)}v|$ and
$\widetilde{\nabla}s(\dot{x})=\frac{\dd}{\dd
  \dot{x}}(e^{-\frac 12\widetilde{\psi_0}}s)e^{\frac 12 \widetilde{\psi_0}}$. One sets then, in charts
\[
  \omega_0:=i\,{\rm
  curv}(\widetilde{\nabla})=\frac{i}{2}
\sum_{j,k}(\widetilde{h_{j,k}}(z,\overline{w})\dd z_j\wedge
\overline{\dd
  w_k}+\overline{\widetilde{h_{j,k}}(z,\overline{w})}\overline{\dd
  z_j}\wedge \dd w_k) \qquad \qquad
h_{j,k}=\frac{\partial^2\psi_0}{\partial z_j\partial \overline{z_k}}.
\]

There is a second natural holomorphic structure on $\widetilde{M}$,
denoted by $\widetilde{J}=(J,J)$. Note that $\widetilde{J}$ preserves the
tangent space of the diagonal of $M\times \overline{M}$, where it
coincides with $J$, and $\widetilde{J}$ commutes with $I$. Given a $J$-holomorphic object on $M$, its $I$-holomorphic
extension to $\widetilde{M}$ is also $\widetilde{J}$-holomorphic.
\end{defn}

We insist
that $\widetilde{\omega_0}$ is very different from the already available
symplectic form on $M\times \overline{M}$ (stemming from the symplectic
structure on each factor), which is a real-valued, non holomorphic,
symplectic form. In the same way, $\widetilde{L}$ is not $L\boxtimes \overline{L}$.

The curvature of $\widetilde{\nabla}$ is positive in the following sense.

\begin{prop}\label{prop:complex_positivity}Consider the following
  anti-linear involution on $T^{\C}\widetilde{M}$:
  \[
    \sigma:\sum_{j}(v^1_j\dd z_j+v^2_j\overline{\dd z_j}+v^3_j\dd
    w_j+v^4_j\overline{\dd w_j})\mapsto
    \sum_{j}(\overline{v^1_j}\overline{\dd
      w_j}+\overline{v^2_j}\dd w_j+\overline{v^3_j}\overline{\dd
      z_j}+\overline{v^4_j}\dd z_j).
    \]
  Let $v\in T^{\C}\widetilde{M}$ nonzero and suppose that $Iv=iv$ (so that $v$
  is an $I$-holomorphic tangent vector). Then
  \[
    \widetilde{\omega_0}(v,\sigma\widetilde{J}v)>0.
  \]
\end{prop}
The involution $\sigma$ maps $I$-holomorphic vectors into
$I$-antiholomorphic vectors and also maps $\widetilde{J}$-holomorphic
vectors into $\widetilde{J}$-antiholomorphic vectors. Identifying the
diagonal of $M\times \overline{M}$ with $M$, $\sigma$ coincides with
the natural involution on $T^{\mathbb{C}}M$. It is the only
anti-linear involution with these properties.
\begin{proof}
  In a chart, write
  \[
    v=\sum_{j} v^1_j\dd z_j+\sum_{j}v^2_j\overline{\dd w_j}.
  \]
  Then
  \[
    \widetilde{J}v=i\sum_{j}v^1_j\dd
    z_j-i\sum_{j}v^2_j\overline{\dd w_j},
  \]
  so that
  \[
    \widetilde{\omega_0}(v,\sigma\widetilde{J}v)=\frac 12\sum_{j,k}[\widetilde{h_{j,k}}(z,\overline{w})+\overline{\widetilde{h_{j,k}}(z,\overline{w})}](v^1_j\overline{v^1_k}+v^2_j\overline{v^2_k}).
  \]
  The Hermitian matrix under brackets is a perturbation of
  $2h_{j,k}(z,\overline{z})$, which is positive definite. Therefore it
  is positive definite as well, hence the claim.
\end{proof}

\begin{defn}
The program of holomorphic extension of a real-analytic Kähler
manifold applied in the last two paragraphs can be applied to $M\times
\overline{M}$ as well. We obtain a manifold which we will denote
$\widetilde{M}\times \widetilde{\overline{M}}$.

We will consider two natural
holomorphic structures on $\widetilde{M}\times
\widetilde{\overline{M}}$. The first one is $(I,I)$, denoted also by $I$, and the second is
$(\widetilde{J}, -\widetilde{J})$, denoted also by $\widetilde{J}$. The symplectic form on
$\widetilde{M}\times \widetilde{\overline{M}}$ will be
$(\widetilde{\omega_0},-\widetilde{\omega_0})$. Holomorphic objects on
$M\times \overline{M}$ again enjoy the property that their $I$-holomorphic
extension to $\widetilde{M}\times \widetilde{\overline{M}}$ is also
$\widetilde{J}$-holomorphic.
\end{defn}
Note that the diagonal of $\widetilde{M}\times
\overline{\widetilde{M}}$ is $I$-holomorphic but
$\widetilde{J}$-totally real.

Let us motivate the introduction of $\widetilde{M}\times
\widetilde{\overline{M}}$: with formula
\eqref{eq:gammak_FIO_intro} as our objective, we are trying to describe the geometry
underlying operators of the form $\exp(tkT_k(f))$ when $f\in
C^{\omega}$, $t\in \R$ is small but fixed, and $k\to +\infty$. For every
$k$, these operators have analytic dependence on $t$ and we may as
well consider $\exp(-itkT_k(f))$ for $t\in \R$. Such operators are now
well-known at least modulo $O(k^{-\infty})$: they correspond to
integral kernels of the form \eqref{eq:gammak_FIO_intro}, for
particular sections $\Phi(it)$ which ``correspond to'' (in a sense we will
make precise) Lagrangians of $M\times
\overline{M}$. The kernel of $\exp(-itkT_k(f))$ is
in fact a ``Lagrangian section'' of $L\boxtimes \overline{L}$ as introduced in \cite{charles_quasimodes_2003}.  These Lagrangians are the graphs of the
Hamilton flow of $f$ at time $t$. It is
natural to expect $\Phi(t)$ to ``correspond to'' (in the same sense)
the graph of the Hamilton flow of $f$ at imaginary time $-it$. This
graph, however, does not sit inside $M\times \overline{M}$ anymore; it
will be a $I$-holomorphic Lagrangian of $\widetilde{M}\times
\widetilde{\overline{M}}$.

We now explain how some holomorphic sections of $L\boxtimes
\overline{L}$ are associated with holomorphic Lagrangians of
$\widetilde{M}\times \widetilde{\overline{M}}$. The first holomorphic
section of interest is associated with the diagonal of
$\widetilde{M}\times \widetilde{\overline{M}}$ and will be the phase
of the Bergman kernel.

\begin{prop}\label{prop:Psi_Bargmann_Identity}
  Let $(M,J,\omega_0)$ be a quantizable Kähler manifold and let
  $(L,h_0)$ be a prequantum line bundle over $M$. Let $\Psi$ be the
  unique holomorphic section of $L\boxtimes \overline{L}$ over a
  neighbourhood of the diagonal in $M\times \overline{M}$ such that
  $h_0(\Psi)=1$ on the diagonal.

  Let $\widetilde{\Psi}$ denote the $I$-holomorphic extension of $\Psi$ to
  a neighbourhood of the diagonal of $M\times \overline{M}$ in
  $\widetilde{M}\times \widetilde{\overline{M}}$. Let $\widetilde{\nabla}$ denote
  the natural connection on $\widetilde{L}\boxtimes
  \widetilde{\overline{L}}$.

  Then (up to further restricting the neighbourhood of the diagonal) $\widetilde{\nabla}
  \widetilde{\Psi}$ vanishes on the diagonal of $\widetilde{M}\times
  \widetilde{\overline{M}}$ and nowhere else; moreover $\widetilde{\nabla}
  \widetilde{\Psi}$ is a defining function for its zero set: the
  distance to the diagonal is comparable to $\|\widetilde{\nabla} \widetilde{\Psi}\|$.
\end{prop}

To make sense of the conditions on $\Psi$, note that for every $x\in
  M$, $\Psi(x,x)$ is an element of $L_x^*\otimes \overline{L_x^*}$, on
  which $h_0(x)$ acts as a nonzero linear form. Usually (see for instance \cite{charles_berezin-toeplitz_2003}), one uses $h_0$ to
  identify $\overline{L_x}$ with $L_x'$, and the condition becomes
  $\Psi(x,x)=1$.

\begin{proof}For the moment let us consider matters on $M\times
  \overline{M}$, equipped with its connection $\nabla$. Observe that
since $h_0(\Psi)=1$ on the diagonal, $\nabla \Psi$ vanishes on the
diagonal of $M\times \overline{M}$ (it certainly vanishes along the diagonal directions, and
moreover $\Psi$ is holomorphic so that
$\nabla_{J\dot{x}}\Psi=i\nabla_{\dot{x}}\Psi$ for all $\dot{x}\in
T(M\times \overline{M})$; to conclude, the diagonal is totally real in
$M\times \overline{M}$). Because $i{\rm curv }\nabla$ is the symplectic
form on $M\times \overline{M}$, sets of the form $\{x\in M, \nabla
\Phi(x)=0\}$, for $\Phi$ a general nonvanishing section of $L\boxtimes
\overline{L}$, have to be isotropic for this symplectic
form. Therefore (up to restricting our attention to a smaller open
neighbourhood of the diagonal) $\{\nabla \Psi=0\}$ is
the diagonal of $M\times \overline{M}$ and moreover (using again the
curvature identity and Proposition \ref{prop:complex_positivity}), $\|\nabla \Psi\|$ is comparable to the distance
to the diagonal.

Letting now $\widetilde{\Psi}$ be the holomorphic extension of $\Psi$, defined in a neighbourhood of
the the diagonal of $M\times \overline{M}$ in $\widetilde{M}\times
\widetilde{\overline{M}}$, for the same reasons the equation $\widetilde{\nabla} \widetilde{\Psi}=0$
defines a $I$-holomorphic isotropic submanifold of $\widetilde{M}\times
\widetilde{\overline{M}}$ for the natural symplectic
form; since its restriction to the real set is the Lagrangian ${\rm
  diag}(M)$, we conclude that $\{\widetilde{\nabla} \widetilde{\Psi}=0\}={\rm
  diag}(\widetilde{M})$, and $\widetilde{\nabla} \widetilde{\Psi}$ is still a
defining function for its zero set.
\end{proof}

\begin{prop}\label{prop:well-def_Lagrangians}
  Let $\Phi$ be a holomorphic section of $L\boxtimes \overline{L}$
  over a neighbourhood of the diagonal of $M\times
  \overline{M}$. Suppose that, in a topology
  of real-analytic functions, the function $x\mapsto h_0(\Phi(x,x))$
  is close to 1. Then the holomorphic extension
  $\widetilde{\Phi}$ of $\Phi$ to a section over a neighbourhood of
  the diagonal of $M\times \overline{M}$ in $\widetilde{M}\times
  \widetilde{\overline{M}}$ is such that
  $\{\widetilde{\nabla}\widetilde{\Phi}=0\}$ is a I-holomorphic
  Lagrangian, for which $\widetilde{\nabla}\widetilde{\Phi}$ is a
  defining function.
\end{prop}
\begin{proof}
  Recall that $L\boxtimes \overline{L}$ is a holomorphic line bundle
  over $M\times \overline{M}$, and that the diagonal of $M\times
  \overline{M}$ is a totally real manifold. Since $\Phi$ is a
  holomorphic section, it is determined by its restriction to the
  diagonal. Under our hypotheses, $\Phi$ is close to $\Psi$ when
  restricted to the diagonal, in some suitable real-analytic
  topology. Therefore, in a small neighbourhood of the diagonal,
  $\Phi$ is close to $\Psi$ (as defined in Proposition
  \ref{prop:Psi_Bargmann_Identity}) in a real-analytic topology. In turn, in a
  neighbourhood of the diagonal of $M\times \overline{M}$ in
  $\widetilde{M}\times \widetilde{\overline{M}}$, $\widetilde{\Phi}$
  is close to $\widetilde{\Psi}$ in a real-analytic topology.

  In particular, the equation $\widetilde{\nabla}\widetilde{\Phi}=0$
  still defines a $I$-holomorphic submanifold of half dimension of
  $\widetilde{M}\times \widetilde{\overline{M}}$. Since $i{\rm
    curv}\widetilde{\nabla}=\widetilde{\omega}$, this manifold again has to
  be isotropic and is therefore a Lagrangian, which lies close (in
  real-analytic topology) to the diagonal of $\widetilde{M}\times
  \widetilde{\overline{M}}$.
\end{proof}

Let $\mathcal{L}$ be a $I$-holomorphic Lagrangian of $\widetilde{M}\times
\widetilde{\overline{M}}$, close to the diagonal in real-analytic
topology. Can $\mathcal{L}$ be realised as the vanishing set of
$\widetilde{\nabla}\widetilde{\Phi}$ for some holomorphic section
$\Phi$? The answer relies on the \emph{Bohr-Sommerfeld index}.

\begin{defn}\label{def:BS}~
  \begin{enumerate}
    \item Let $\mathcal{L}$ be a Lagrangian of
  $\widetilde{M}\times \widetilde{\overline{M}}$. Let $\gamma:[0,1]\to
  \mathcal{L}$ be a
  parametrised closed loop. Let $\Gamma(0)\in
  \widetilde{L}\boxtimes \widetilde{\overline{L}}$ nonzero over $\gamma(0)$
  and let $\Gamma$ be the parallel transport of $\Gamma(0)$ along
  $\gamma$ (following the already defined connection $\widetilde{\nabla}$). The
  \emph{Bohr-Sommerfeld index} of $\gamma$ is
  $\Gamma(1)/\Gamma(0)$. Since parallel transport is a linear
  differential equation and $\widetilde{L}\boxtimes
  \widetilde{\overline{L}}$ is a line bundle, the Bohr-Sommerfeld
  index of $\gamma$ does not depend on $\Gamma(0)$.
\item Since $\mathcal{L}$ is Lagrangian, it is $\widetilde{\nabla}$-flat; therefore the Bohr-Sommerfeld index of a loop only
  depends on its topology. Define the \emph{Bohr-Sommerfeld
    class} as the group morphism $\pi_1(\mathcal{L})\to \C^*$.
\item We say that $\mathcal{L}$ is a \emph{Bohr-Sommerfeld Lagrangian}
  when the group morphism above is trivial.
\end{enumerate}
\end{defn}

\begin{prop}\label{prop:Lagr-to-sec}
  Let $\mathcal{L}$ be a $I$-holomorphic Lagrangian of
  $\widetilde{M}\times \widetilde{\overline{M}}$. The equation
  $\widetilde{\nabla}\widetilde{\Phi}=0$ on $\mathcal{L}$, with
  $\widetilde{J}$-holomorphic and $I$-holomorphic unknown $\widetilde{\Phi}$, defines a
  rank one sheaf over $\mathcal{L}$; nonzero local solutions are such
  that $\widetilde{\nabla}\widetilde{\Phi}$ are defining functions for
  $\mathcal{L}$.

  The cohomology of this sheaf is exactly the
  Bohr-Sommerfeld class of $\mathcal{L}$; in particular, a non-zero
  global solution exists if, and only if, $\mathcal{L}$ is a
  Bohr-Sommerfeld Lagrangian.
\end{prop}
In other terms, one can always find $\widetilde{\Phi}$ such that
$\widetilde{\nabla}\widetilde{\Phi}=0$ on $\mathcal{L}$ locally
  near any point of $\mathcal{L}$, and the solution is unique up to a
multiplicative constant. The different solutions can be patched into a
global function $\widetilde{\Phi}$ if and only if $\mathcal{L}$ is Bohr-Sommerfeld.

\begin{proof}
Let us first prove that, locally, there is no
obstruction for the existence of $\widetilde{\Phi}$. Fixing arbitrarily the value of $\widetilde{\Phi}$ at a
point $x_0\in \mathcal{L}$, the equation $\widetilde{\nabla}\widetilde{\Phi}=0$ on
$\mathcal{L}$ determines $\widetilde{\Phi}$ on a neighbourhood of
$x_0$ in $\mathcal{L}$, by parallel transport of $\Phi(x_0)$ along
short paths in $\mathcal{L}$. Since $\mathcal{L}$ is Lagrangian, the
value obtained by parallel transport does not depend on the
short paths but only on their endpoints. Since $\mathcal{L}$ is a
Lagrangian, it is
$\widetilde{J}$-totally real by Proposition
\ref{prop:complex_positivity}, and since $\widetilde{\Phi}$ is $\widetilde{J}$-holomorphic, the
knowledge of $\widetilde{\Phi}$ locally on $\mathcal{L}$ determines
$\widetilde{\Phi}$ on a whole neighbourhood of $x_0$ in $\widetilde{M}\times
\widetilde{\overline{M}}$; using Proposition
\ref{prop:complex_positivity} again, if $\widetilde{\Phi}(x_0)\neq 0$
then
$\widetilde{\nabla}\widetilde{\Phi}$ is constrained by the curvature
identities to be a defining function for $\mathcal{L}$ near $x_0$. In
summary, without any constraint, the data of a $I$-holomorphic Lagrangian $\mathcal{L}$ near
any of its points determines completely $\widetilde{\Phi}$ as in the
claim of Proposition \ref{prop:well-def_Lagrangians} near this
point up to a multiplicative constant.

Let us now study whether these local solutions can be patched
together. Noticing that the restriction of $\widetilde{\Phi}$ to a loop
in $\mathcal{L}$ has to be flat (that is, it solves the parallel
transport equation), by Definition \ref{def:BS} a nonzero solution
$\widetilde{\Phi}$ exists in a neighbourhood of a loop $\gamma \subset
\mathcal{L}$ if, and only if, the Bohr-Sommerfeld index of $\gamma$ is
$1$. Hence, a nonzero solution exists on a neighbourhood of
$\mathcal{L}$ if and only if $\mathcal{L}$ is Bohr-Sommerfeld.
\end{proof}

\subsection{Analytic symbols}
\label{sec:analytic-symbols}

Here we rapidly present the analytic function spaces symbol
spaces which we will rely on; we refer to
\cite{sjostrand_singularites_1982,hitrik_two_2013} for more
in-depth introductions to analytic semiclassical analysis. 

\begin{defn}\label{def:analytic-norms}
  Let $U$ be a real-analytic Riemannian manifold (possibly open) and let $m,r,R>0$.

  The Banach space $H^r_m(U)$ consists of all functions $a$ from $U$ to $\C$
  such that there exists $C>0$ satisfying, for every $j\in \N$,
  \[
    \|\nabla^ja\|_{L^{\infty}(U)}\leq C\frac{r^jj!}{(j+1)^m};
  \]
  the best constant $C$ above is the Banach norm of $a$.

  The Banach space $S^{r,R}_m(U)$ of \emph{formal analytic symbols} consists of all sequences
  $(a_k)_{k\in \N}$ of elements of $H_r^m(U)$ such that there exists
  $C>0$ satisfying, for every $(j,k)\in \N^2$,
  \[
    \|\nabla^ja_k\|_{L^{\infty}(U)}\leq
    C\frac{r^jR^k(j+k)!}{(j+k+1)^m};
  \]
  the best constant $C$ above is the Banach norm of $a$.
\end{defn}
The definition of $S_m^{r,R}$ and the Stirling formula imply that for every $c_1>0,c_2>0$
  small enough (strictly smaller than $\frac{e}{R}$), there exists $c_3>0,C$ such that
  for every $a\in S_m^{r,R}(U)$, for every $\hbar>0$,
  \[
    \left\|
    \sum_{k=c_1\hbar^{-1}}^{c_2\hbar^{-1}}\hbar^ka_k\right\|_{L^{\infty}(U)}\leq
  Ce^{-c_3\hbar^{-1}}\|a\|_{S_m^{r,R}(U)}.
\]
We will call \emph{classical analytic symbol} (or simply analytic
symbol, since we will only encounter classical ones in this text) a function of the form
\[
  U\times (0,1)\ni (x,\hbar)\mapsto \sum_{k=0}^{c\hbar^{-1}}\hbar^ka_k(x)
\]
where $(a_k)_{k\in \N}\in S^{r,R}_m(U)$ and $0<c<\frac{e}{R}$; such a function will
be called a \emph{realisation} of $(a_k)_{k\in \N}$. By the above
inequality, different realisations of the same formal analytic symbol
are exponentially close to each other, and an analytic
symbol is associated with a unique formal analytic symbol.

The point of Definition \ref{def:analytic-norms} is that (classical) analytic symbols are stable by analytic stationary
phase: an expression of the form
\[
  (x;\hbar)\mapsto \int e^{i\frac{\phi(x,y)}{\hbar}}a(x,y;\hbar)\dd y
\]
is, under suitable geometric conditions on $\phi$, if $\phi$ is
real-analytic and $a$ is a classical real-analytic symbol, equal to
$e^{i\frac{\psi(x)}{\hbar}}\hbar^{\frac N2}b(x;\hbar)+O(e^{-c\hbar^{-1}})$ where $\psi$ is
real-analytic, $N$ is an integer,
$b$ is a classical real-analytic symbol, and
$c>0$. We refer to \cite{sjostrand_singularites_1982}, Théorème 2.8
for details.

\subsection{Semiclassical analysis of the Bergman projector}
\label{sec:semicl-analys-bergm}
In this subsection we gather the available results on
Berezin--Toeplitz quantization in analytic regularity that we will
use. We formulate the results in terms of the geometric definitions of
Section \ref{sec:polar-compl-assoc}. We present in particular
\emph{covariant Berezin--Toeplitz operators}, an alternative to
formula \eqref{eq:contra_Toep} which is more suited to our analysis.

\begin{prop}\label{prop:calc_Toeplitz}~\begin{enumerate}
    \item
  Let $(M,J,\omega_0)$ be a compact, real-analytic, polarised Kähler
  manifold; let $(L,h_0)$ be a prequantum line bundle over $M$. Let
  $U$ be a small open neighbourhood of the diagonal in $M\times
  \overline{M}$ and let $\Psi$ be the section of $L\boxtimes
  \overline{L}$ featured in Proposition
  \ref{prop:Psi_Bargmann_Identity} ($\Psi$ is holomorphic on $U$ and
  $h_0(\Psi(x,x))=1$). There exists a
  formal analytic symbol $s\in
  S_{r,R}^m(U)$ with principal symbol
    $s_0=(2\pi)^{-d}$, and constants $c>0$,
  $C>0$ such that for any $u,v\in H^0(M,L^{\otimes k})$,
  \begin{multline}\label{eq:Bergman}
    \left|\langle u,v\rangle_{{\rm Hilb}_k(0)}-k^d\int_U\langle \Psi(x,y)^{\otimes
    k},\overline{u(x)}\otimes v(y)\rangle_{(h_0\otimes
    \overline{h_0})^{\otimes k}}s(x,y;k^{-1})\omega_0^{\wedge d}(\dd x)
    \omega_0^{\wedge d}(\dd y)\right|\\ \leq Ce^{-ck}\|u\|_{{\rm Hilb}_k(0)}\|v\|_{{\rm Hilb}_k(0)}
  \end{multline}
  where $s(x,y;k^{-1})$ denotes any realisation of $s$.
  
  \item Given $r,R,m>0$, $V\subset U$ containing the diagonal of
    $M\times\overline{M}$ and $a\in S^{r,R}_m(V)$, let
  \[
    T_k^{\rm cov}(a):(x,y)\mapsto \mathds{1}_{(x,y)\in V}k^d\Psi^{\otimes
        k}(x,y)s(x,y;k^{-1})a(x,y;k^{-1})
    \]
    where $a(x,y;k^{-1})$ denotes any realisation of $a$.
    Then there exists $m_0>0$, $c>0$, $C>0$, and functions $r_0,R_0$ such that, for
    every $m,r,R$ such that $m\geq m_0,r\geq r_0(m), R\geq R_0(r,m)$,
    for every $a\in S_{r,R}^m(V)$ and $b\in S_{2r,2R}^{m}(V)$ there
    exists $a\#b\in S_{2r,2R}^m(V)$ such that
    \[
      |\|T_k^{\rm cov}(a)T_k^{\rm cov}(b)-T_k^{\rm
        cov}(a\#b)|\|_{H^0\to H^0}\leq
      C\|a\|_{S^{r,R}_m}\|b\|_{S^{2r,2R}_m}e^{-ck},
    \]
    and moreover the bilinear map $(a,b)\to a\#b$ is continuous:
    \begin{align}\label{eq:product_norm_control}
      \|a\#b\|_{S_m^{2r,2R}}\leq C\|a\|_{S^{r,R}_m}\|b\|_{S^{2r,2R}_m}.
    \end{align}
    \item Given $r,R,m>0,c_0>0$, $V\subset U$ containing the
    diagonal of $M\times \overline{M}$ , there exists
    $r',R',m',C,c>0$ and $V'\subset U$ containing the diagonal of $M\times
    \overline{M}$ such that for every $a\in S^{r,R}_m(V)$ with
    principal symbol $a_0$ bounded away from zero on $V$ with
    $|a_0|>c_0$, there exists $b\in S^{r',R'}_{m'}(V')$ with continuous
    dependence on $a$ such that
    \[
      \|T_k^{\rm cov}(a)T_k^{\rm cov}(b)-\Pi_k\|_{H^0\to H^0}\leq
      C\|a\|_{S_m^{r,R}}e^{-ck}.
    \]
\end{enumerate}
  \end{prop}
  \begin{proof}Equation \eqref{eq:Bergman} corresponds to Theorem 5.5
    of \cite{rouby_analytic_2018} and Theorem A of
    \cite{deleporte_toeplitz_2018}.
    
  Equation
    \eqref{eq:product_norm_control} is a consequence of Theorem B in
    \cite{deleporte_toeplitz_2018}; let us prove how to obtain
    \eqref{eq:product_norm_control} from there, since
    the definition of covariant operators here is slightly different from that
    in \cite{deleporte_toeplitz_2018}. Letting $*$ denote the Cauchy
    product of formal symbols, we
    know from \cite{deleporte_toeplitz_2018}, Theorem B, that
    \[
      \|(a\#b)*s\|_{S^{2r,2R}_m}\leq
      C\|a*s\|_{S^{r,R}_m}\|b*s\|_{S^{r,R}_m}
    \]
    for $r,R,m$ as before. Here, without loss of generality, the
    symbol $s$ of the Bergman projector and its Cauchy inverse $s^{-1}$
    lie in a symbol class which injects continuously in
    $S^{r,R}_m$. Thus, by continuity of the Cauchy product
    (\cite{deleporte_toeplitz_2018}, Proposition 3.8),
    we finally obtain the desired result.

     The third part of the proposition is a consequence of the
      second part of 
      Theorem B in \cite{deleporte_toeplitz_2018}, modulo
      multiplication or division by $s$ for the Cauchy product,
      exactly as for \eqref{eq:product_norm_control}.
  \end{proof}
  \begin{rem}[Equivalent analytic norms] There is now a large body of
    literature concerning Berezin--Toeplitz operators in
    real-analytic regularity and their symbolic calculus. The analytic
    norms of Definition \ref{def:analytic-norms}, used in \cite{deleporte_toeplitz_2018},
    lead to lengthy proofs, but to this date, it is the only way to
    obtain, globally, analytic norms which are relatively stable under
    composition: the Banach space of $a\#b$ is the
    same as that of $b$, if $a$ is more regular. This contrasts with,
    e.g., Proposition 4.1 in \cite{charles_analytic_2021}, where there
    is a loss of regularity. We will use this fact later.
  \end{rem}
  
  A consequence of the Bergman projector asymptotics
  in Proposition \ref{prop:calc_Toeplitz} (for which real-analyticity
  is in fact a very strong requirement) is that ${\rm FS}_k$ is
  approximately a left inverse to ${\rm Hilb}_k$.
  \begin{prop}\label{prop:Tian-Zelditch}\cite{tian_set_1990,zelditch_szego_2000}As
    $k\to \infty$, for every $j \in \N$, $\|{\rm FS}_k({\rm Hilb}_k(0))\|_{C^j}=O(k^{-1})$.
  \end{prop}

  \begin{rem}\label{rem:FS_k-Tkcov}~
    \begin{enumerate}
    \item Using the notations of Proposition \ref{prop:calc_Toeplitz},
      one has, for every $x\in M$ and every $k$ large enough,
      \[
        {\rm FS}_k({\rm
          Hilb}_k(0))(x)=\frac{1}{k}\log(k^ds(x,x;k^{-1}))-\frac{d\log(k)}{k}=\frac{1}{k}\log(s(x,x;k^{-1}))
      \]
      which motivates our alternative choice for the definition of
      ${\rm FS}_k$.
    \item 
    The method of proof of \cite{zelditch_szego_2000} allows to
    generalise Proposition \ref{prop:Tian-Zelditch} into the fact that
    \[
      {\rm FS}_k(T_k^{\rm cov}(f))=O_{C^{\infty}}(k^{-1})
    \]
    for every $f$ such that $T_k^{\rm cov}(f)$ is positive and
    self-adjoint, i.e. $f$ is real and positive on the diagonal.
    \end{enumerate}
  \end{rem}
    The map ${\rm FS}_k\circ {\rm Hilb}_k$ is independent of the choice of
    the reference Kähler metric, so that we obtain ${\rm FS}_k({\rm Hilb}_k(\phi))= \phi+O(k^{-1})$
    for every $\phi\in \mathcal{H}$ corresponding to a smooth
    metric.

  We also recall the following subprincipal identities concerning
  Berezin--Toeplitz quantization in the covariant and contravariant case.

  \begin{prop}[\cite{charles_berezin-toeplitz_2003}, page 4 and
    \cite{deleporte_toeplitz_2018}, Proposition 4.11]\label{prop:calc_Toep_subp}
    Let $f$ and $g$ be analytic symbols defined near the diagonal of $M$. Let $b$ be the
    holomorphic extension of $\langle \partial f,
    \overline{\partial}g\rangle_{\Omega^{(0,1)}(M),\Omega^{(1,0)}(M)}$
    to a neighbourhood of the diagonal in $M\times \overline{M}$. Then
    \begin{align}
      T_k^{\rm cov}(f)T_k^{\rm cov}(g)&=T_k^{\rm cov}(fg)+k^{-1}T_k^{\rm
                          cov}(b)+O(k^{-2})\label{eq:Charles_1}\\
    T_k^{\rm cov}(f)&=T_k(f)+k^{-1}T_k(\Delta_{\phi}f)+O(k^{-2})\label{eq:Charles_2}.
    \end{align}
    More generally, for every analytic symbol $f$, there exists
    an analytic symbol $f^{\rm cov}$ such that
    \[
      T_k^{\rm cov}(f^{\rm cov})=T_k(f).
    \]
  \end{prop}
  

\section{Analytic Fourier Integral Operators close to identity}
\label{sec:analyt-four-integr}
By formula \eqref{eq:Bergman}, the Bergman projector $\Pi_k$ has,
asymptotically and near the diagonal, an expression of Wentzel-Kramers-Brillouin type, with
a phase (fixed quantity to the power $k$) multiplied by an analytic
symbol. Let us generalise this expression into a definition.

\begin{defn}\label{def:FIOs}Recall the analytic symbol $s$ of the Bergman kernel from
  Proposition \ref{prop:calc_Toeplitz}.
  
  An analytic Fourier Integral Operator (FIO) close to identity is a
  sequence of sections of $L\boxtimes \overline{L}$ of the form
  \[
    I^{V,\Phi}_{k}(a):(x,y)\mapsto k^d\mathds{1}_{(x,y)\in V}\Phi^{\otimes
      k}(x,y)s(x,y;k^{-1})a(x,y;k^{-1}),
  \]
  where
  \begin{itemize}
  \item $V$ is a neighbourhood of the diagonal in $M\times
    \overline{M}$.
  \item $\Phi$ is a holomorphic section of $L\boxtimes
  \overline{L}$ over $V$.
\item For all $(x,y)\in \partial V$, one has $\|\Phi(x,y)\|_{h_0\otimes
  \overline{h_0}}<1$ (and
therefore by compactness $\|\Phi\|_{h_0\otimes \overline{h_0}}$ is bounded away from $1$ on the
  boundary).
\item $s$ is the symbol of the Bergman kernel of Proposition \ref{prop:calc_Toeplitz}.
\item $a(x,y;k^{-1})$ is a (classical) analytic symbol on $V$, and is holomorphic.
\end{itemize}

By convention,
$I^{V,\Phi}_k(a)$ also denotes the associated operator on
$H^0(M,L^{\otimes k})$ defined as follows: given two holomorphic sections
$u,v$ of $L^{\otimes k}$,
\begin{equation}\label{eq:op_kernel}
  \langle u,I^{V,\Phi}_k(a)v\rangle_{{\rm Hilb}_k(0)}:=\int_V \langle I^{V,\Phi}_k(a),
  \overline{u(x)}\otimes
  v(y)\rangle_{(h_0\otimes \overline{h_0})^{\otimes
      k}}\omega_0^{\wedge d}(\dd x)\omega_0^{\wedge d}(\dd y).
\end{equation}
\end{defn}
Covariant Toeplitz operators are examples of Analytic FIOs with $\Phi=\Psi$.

This section is devoted to the general properties of Analytic Fourier
Integral Operators. As in the usual (self-adjoint) case, they are
associated with Lagrangians, which allow to understand geometrically
and manipulate these operators: composition, inversion, change of
reference Kähler metric, and associated metric under ${\rm FS}_k$. All these
manipulations will be useful in the proof of Theorem \ref{thr:FIOs}.

Let us first prove a rather weak bound on the operator norm of these
operators.
\begin{prop}\label{prop:bound-op-norm}
   Let $I^{V,\Phi}_k(a)$ be an analytic FIO close to identity. Then,
  for the operator norm given by ${\rm Hilb}_k(0)$, the operator norm
  of $I^{V,\Phi}_k(a)$ satisfies
  \[
    \forall \epsilon, \exists C, \forall k, \|I^{V,\Phi}_k(a)\|_{H^0\to H^0}\leq
    C\left(\sup_{x,y}|\Phi(x,y)|_{h_0\otimes \overline{h_0}}\right)^{k(1+\epsilon)},
  \]
\end{prop}
\begin{proof}
  In Formula \eqref{eq:op_kernel}, if both $u$ and $v$ are
  normalised in ${\rm Hilb}_k(0)$ we want to bound the right-hand side
  from above. First of all, since $u$ and $v$ are holomorphic we
  obtain
  \[
    \sup_{x,y}\|u(x)\otimes v(y)\|_{h_0\otimes \overline{h_0}}\leq C_0k^{2d}.
  \]
  The integral on the right-hand-side of
  \eqref{eq:op_kernel} is therefore bounded (in charts) by
  \[
    C_0k^{3d}\sup(|a|)\left(\sup_{x,y}\|\Psi \|_{h_0\otimes \overline{h_0}}\right)^{k}.
  \]
\end{proof}
Definition \ref{def:FIOs} is relative to the particular Kähler
structure $(M,J,\omega_0)$. One can in fact change the Kähler
structure to $\omega_{\phi}$ for $\phi$ close to $0$ and obtain
another analytic Fourier Integral Operator. The relationship between
the two operators is not explicit at this stage, but one can compute
the first two derivatives of the map between the phases.
\begin{prop}\label{prop:manip_kernel_ops}~
 Let $I^{V,\Phi}_k(a)$ be an analytic Fourier Integral
      opeartor close to identity. There exists $c>0$ such that the
      following is true. Let $\phi\in
  \mathcal{H}$ be real-analytic and sufficiently close to $0$ in
  real-analytic topology.

  Then there exists a smaller open neighbourhood $V'$ of the diagonal
  in $M\times \overline{M}$, a holomorphic section $\Phi'$ of $L\boxtimes \overline{L}$
  over $V'$, a holomorphic classical analytic symbol $b$ on $V'$, such that for every two holomorphic
  sections $u,v$ of $L^{\otimes k}$,
  \[
    \langle u, I_k^{V,\Phi}(a)v\rangle_{{\rm Hilb}_k(0)}=\langle
    u,I_k^{V',\Phi'}(b)v\rangle_{{\rm Hilb}_k(\phi)}+O(e^{-ck}\|u\|_{{\rm Hilb}_k(0)}\|v\|_{{\rm Hilb}_k(0)}).
  \]
  If $\phi(t)$ is a differentiable one-parameter family of
  real-analytic Kähler potentials and $\Phi(t)$ is a differentiable
  one-parameter family of real-analytic sections of $L\boxtimes
  \overline{L}$ with $\phi(0)=0$ and $\Phi(0)=\Psi$ (the phase of the
  Bergman kernel), then $\Phi'(0)=\Psi$ and on the diagonal
  \begin{equation}\label{eq:var_phase}
    \left.\frac{\dd \log \Phi'}{\dd t}\right|_{t=0}=\left.\frac{\dd
        \log\Phi}{\dd t}\right|_{t=0}+2\left.\frac{\dd\phi}{\dd t}\right|_{t=0}.
  \end{equation}
  If $\Phi$ and $\phi$ are twice-differentiable, then so is $\Phi'$,
  and if in addition $\frac{\dd \log\Phi}{\dd t}|_{t=0}$ is real, then
  on the diagonal
  \begin{equation}\label{eq:var2_phase}
     \left.\frac{\dd^2 \log \Phi'}{\dd t^2}\right|_{t=0}=\left.\frac{\dd^2
        \log\Phi}{\dd t^2}\right|_{t=0}+2\left.\frac{\dd^2
        \phi}{\dd t^2}\right|_{t=0}+4{\rm Re}\langle
    \partial \tfrac{\dd \log \Phi}{\dd
      t}|_{t=0},\overline{\partial}\tfrac{\dd \phi}{\dd
      t}|_{t=0}\rangle+2\langle \partial\tfrac{\dd \phi}{\dd t}|_{t=0},\overline{\partial}\tfrac{\dd \phi}{\dd
      t}|_{t=0}\rangle;
  \end{equation}
  all scalar products are taken with respect to the Kähler structure $(M,J,\omega_0)$.
\end{prop}
\begin{proof}
  One can of course write, for $u,v\in H^0$,
    \[
      \langle u,I^{V,\Phi}_k(a)v\rangle_{{\rm Hilb}_k(0)}=\int \langle
      (\Phi(x,y)e^{\phi(x)+\phi(y)})^{\otimes k},
      u(x)\otimes \overline{v(y)}\rangle_{(h_{\phi}\otimes
        \overline{h_{\phi}})^{\otimes k}}a(x,y;k^{-1})V(x)V(y)
      \omega_{\phi}^{\wedge d}(\dd x)\omega_{\phi}^{\wedge d}(\dd y),
    \]
    where $V(x)$ is the ratio between the volume forms
    $s_0\omega_0^{\wedge d}$ and $s_{\phi}\omega_\phi^{\wedge d}$. This
    expression, however, involves a non-holomorphic section of
    $L\boxtimes \overline{L}$, since neither $\phi$ nor $V$ are
    holomorphic. Nonetheless, by definition of the Bergman kernel
    $\Pi_k^{\phi}$, and introducing
    \begin{equation}\label{eq:K_as_int}
      K:(x,y)\mapsto \int_{(z,w)\in V}\langle \Pi_k^{\phi}(x,z)\otimes
      \Pi_k^{\phi}(y,w),\Phi(z,w)^{\otimes k}e^{k\phi(z)+k\phi(w)}\rangle_{(h_\phi\otimes
        \overline{h_\phi})^{\otimes k}}a(z,w)V(z)V(w)\dd z \dd
      w,
    \end{equation}
    $K$ is a holomorphic section of $L\boxtimes \overline{L}$, and by
    definition of $\Pi_k^{\phi}$,
    \[
      \langle u,I^{V,\Phi}_k(a)v\rangle_{{\rm Hilb}_k(0)}=\int \langle
      K(x,y),
      u(x)\otimes \overline{v(y)}\rangle_{(h_{\phi}\otimes
        \overline{h_{\phi}})^{\otimes k}}
      \omega_{\phi}^{\wedge d}(\dd x)\omega_{\phi}^{\wedge d}(\dd y).
    \]
    It remains to show that $K$ is exponentially small away from
    the diagonal and of FIO form near the diagonal. The spirit of the
    proof is that
    in charts,
    one can compute $K$ from the formula \eqref{eq:K_as_int}
    by stationary phase. Let us do so explicitly enough to recover
    formulas \eqref{eq:var_phase} and \eqref{eq:var2_phase}. We will write down
    \eqref{eq:K_as_int} in a Hermitian chart for $h_{\phi}$. First of all,
    we write
    \[
      \Phi(z,w)=\Psi(z,w)\exp(f(z,w)),
    \]
    where $f$ is a holomorphic function on $V$ close to $0$.
    We know that, in a Hermitian chart for $h_0$, given by a Kähler
    potential $\psi_0$, the phase $\Psi$ of the Bergman kernel $\Pi_k$
    reads
    \[
      \Psi(z,w)=\exp(-\tfrac{\psi_0(z)}{2}+\widetilde{\psi_0}(z,w)-\tfrac{\psi_0(w)}{2}).
    \]
    Here as usual $\widetilde{\psi_0}$ denotes the holomorphic
    extension on $V$ of $\psi_0$.

    The kernel of $\Pi^{\phi}$ has a phase of the same form, in the
    Hermitian charts for $h_{\phi}$, where
    $\psi_0$ is now replaced by $\psi_{\phi}=\psi_0+\phi$.
    
    The transition function from the Hermitian chart for $h_0$ to the
    Hermitian chart for $h_{\phi}$ is given by multiplication by
    $e^{-\frac{\phi}{2}}$. Consequently, in the Hermitian chart for
    $h_{\phi}$, the section $(z,w)\mapsto
    \Phi(z,w)e^{\phi(z)+\phi(w)}$ is written as
    \[
      \exp(-\tfrac{\psi_{\phi}(z)}{2}+\widetilde{\psi_{\phi}}(z,w)-\tfrac{\psi_{\phi}(w)}{2})\exp(f(z,w)-\widetilde{\phi}(z,w)+\phi(z)+\phi(w)).
    \]
    We are now ready to compute the phase $\Phi'$ in the case where
    both $f$ and $\phi$ are infinitesimally close to $0$. If $x$ and
    $y$ are close enough, we obtain, computing every section in the
    Hermitian chart for $h_{\phi}$,
    \[
      K(x,y)=\int_{W(x,y)}e^{kF(x,z,w,y)}A(x,z,w,y)\dd z \dd w
    \]
    where $W(x,y)$ is a neighbourhood of $\{x\}$ and where the
    holomorphic extension of $F$ reads
    \begin{multline}\label{eq:phase_K}
      \widetilde{F}(x,\overline{x},z,\overline{z},w,\overline{w},y,\overline{y})=
-\tfrac{\widetilde\psi_{\phi}(x,\overline{x})}{2}+\widetilde\psi_{\phi}(x,\overline{z})-\widetilde\psi_{\phi}(z,\overline{z})+\widetilde\psi_{\phi}(z,\overline{w})-\widetilde\psi_\phi(w,\overline{w})+\widetilde\psi_\phi(w,\overline{y})-\tfrac{\widetilde\psi_\phi(y,\overline{y})}{2}\\
    +\widetilde{f}(z,\overline{w})-\widetilde\phi(z,\overline{w})+\widetilde\phi(z,\overline{z})+\widetilde\phi(w,\overline{w}).
  \end{multline}
  Suppose $(x,\overline{x})=(y,\overline{y})$ lies on the real locus
  of $\widetilde{M}$.
  Then, the second line is close to $0$ in real-analytic topology, and
  the first line is a positive phase function of $(z,w)$ with a unique
  critical point. This critical
  point is $(z,\overline{z},w,\overline{w})=(x,\overline{x},x,\overline{x})$, where the first line is equal to $0$.

  For $f$ and $\phi$ small enough, and for
  $(x,\overline{x},y,\overline{y})$ close to the diagonal of $M$, the function \eqref{eq:phase_K} is
  still a positive phase function of $(z,\overline{z},w,\overline{w})$ with a unique critical
  point. Therefore we can apply analytic stationary phase
  (\cite{sjostrand_singularites_1982}, Théorème 2.8) and obtain a
  formula of the form
  \[
    K(x,y)=\Phi'(x,y)^{\otimes k}b(x,y;k^{-1})+O(e^{-ck})
  \]
  for $(x,y)$ close to the diagonal, where $\Phi'$ is a section of
  $L\boxtimes \overline{L}$, $b$ is an analytic symbol, and $c>0$. The
  section $\Phi'$ will be close (in real-analytic topology) to $\Psi$,
  and the neighbourhood of the diagonal of $M\times \overline{M}$ on
  which one can perform analytic stationary phase can be conisdered
  fixed for $\phi,f$ small, so that if $\phi,f$ are even smaller,
  $|\Phi'|_{h_{\phi}\otimes \overline{h_{\phi}}}<1$ on the boundary of
  this domain. If $(x,y)$ lies away from the diagonal of $M\times M$,
  then in \eqref{eq:phase_K} either $(x,z)$, $(z,w)$, or $(w,y)$ are
  away from the diagonal, and since $\Phi$ close to $\Psi$, the
  integrand is then $O(e^{-ck})$.

  To prove formula \eqref{eq:var_phase} we look at \eqref{eq:phase_K}
  again: if $\phi$ and $f$ are infinitesimally close to $0$ then the
  critical point $(z_c(x),\overline{z_c(x)},w_c(x),\overline{w_c(x)})$ is close to $(x,\overline{x},x,\overline{x})$ so that the
  value of $F$ at the critical point is close to
  $f(x)+\phi(x)$. This identity holds in the Hermitian chart for
    $h_{\phi}$, and therefore in the (fixed) Hermitian chart for
  $h_0$ the value of $F$ at the critical point is close to
  $f(x)+2\phi(x)$. Therefore \eqref{eq:var_phase} holds.

  To prove \eqref{eq:var2_phase} we need to study the term of order $2$ in $\phi$ and
  $f$ of the value at the critical point of $F$ . For this we
  first write down the equations for the critical point as
  \begin{align*}
    0&=-\partial\widetilde\psi_0(z,\overline{z})+\partial\widetilde\psi_0(z,\overline{w})+\partial
    f(z,\overline{w})\\
    0&=\overline\partial\widetilde\psi_0(x,\overline{z})+\overline\partial\widetilde\phi(x,\overline{z})-\overline{\partial}\widetilde\psi_0(z,\overline{z})\\
    0&=\partial\widetilde{\psi}_0(w,\overline{x})+\partial\widetilde\phi(w,\overline{x})-\partial\widetilde\psi_0(w,\overline{w})\\
    0&=-\overline\partial\widetilde\psi_0(w,\overline{w})+\overline\partial\widetilde\psi_0(z,\overline{w})+\overline\partial f(z,\overline{w}),
  \end{align*}
  these equations being obtained by differentiating \eqref{eq:phase_K}
  with respect to $z,\overline{z},w,\overline{w}$ respectively.

  To first order in $f$ and $\phi$, the critical point is then given by
  \begin{align*}
    z^*&=x+A_0(x)^{-1}\overline\partial\phi(x)+O(|f|^2+|\phi|^2)\\
    \overline{z}^*&=\overline{x}+A_0(x)^{-1}\partial(f+\phi)(x)+O(|f|^2+|\phi|^2)\\
    w^*&=x+A_0(x)^{-1}\overline\partial(f+\phi)(x)+O(|f|^2+|\phi|^2)\\
    \overline{w}^*&=\overline{x}+A_0(x)^{-1}\partial\phi(x)+O(|f|^2+|\phi|^2),
  \end{align*}
  where $A_0(x)$ is the positive definite matrix $\partial
  \overline{\partial}\psi_0(x,x)$.

  If $f$ is real, the computations for the second order contribution
  become quite symmetrical, and we find
  \[
    \widetilde{F}(x,\overline{x},z^*,\overline{z}^*,w^*,\overline{w}^*,x,\overline{x})=f(x)+\phi(x)
    +4{\rm Re}(\partial f \cdot
    A_0(x)^{-1}\overline{\partial}\phi)+2\partial \phi \cdot
    A_0(x)^{-1}\overline\partial\phi+O(|f|^3+|\phi|^3).
  \]
 The products involved are exactly the scalar products between
  (anti)-holomorphic gradients for the Kähler structure
  $\omega_0$. Remembering to add $\phi(x)$ to $F$ to obtain an
  expression in the Hermitian chart for $h_0$, the proof is complete. 
  \end{proof}

If $\Phi$ is close to $\Psi$ in a real-analytic topology, then by the
results of the previous section, to $\Phi$ one can associate a
holomorphic Lagrangian of $\widetilde{M}\times
\widetilde{\overline{M}}$. This Lagrangian encodes several properties
of $I^{V,\Phi}_k(a)$, and allows for instance to understand the
composition law on analytic Fourier Integral operators.

\begin{prop}\label{prop:compo_FIO}
  Let $V$ be a neighborhood of the diagonal in
  $\widetilde{M}$. Let $\Psi$ denote the phase of the Bergman kernel
  in Proposition \ref{prop:Tian-Zelditch}. For every $r,R,m,\epsilon$, there exists $r',R',m',\delta>0,c>0,C$
  and a neighborhood $V'$ of the diagonal in $\widetilde{M}$
  such that the following is true. For every holomorphic sections
  $\Phi_1$ and $\Phi_2$ of $L\boxtimes \overline{L}$ over $V$ such that
  such that
  \[
    \|\Phi_1/\Psi-1\|_{H^r_m(V)}< \delta\qquad \qquad
    \|\Phi_2/\Psi-1\|_{H^{r}_m(V)}< \delta,
  \]
  there exists a holomorphic section $\Phi$ over $V'$ such that
  \[
    \|\Phi/\Psi-1\|_{H^{r'}_{m'}(V')}<\epsilon
  \]
  (and the map $(\Phi_1,\Phi_2)\mapsto
  \Phi$ is continuous), and for every $a_1,a_2\in
  S^{r,R}_m(V)$ there exists
  $a\in S^{r',R'}_{m'}(V')$ (depending continuously on $a_1,a_2,\Phi_1,\Phi_2$), satisfying
  \[
    \|a\|_{S^{r',R'}_{m'}(V')}\leq
    C\|a_1\|_{S^{r,R}_m(V)}\|a_2\|_{S^{r,R}_m(V)}
  \]
  and such that
  \[
    \left\|I^{V,\Phi_1}_{k}(a_1)\circ
    I^{V,\Phi_2}_k(a_2)-I^{V',\Phi}_k(a)\right\|_{L^2\to
    L^2}\leq Ce^{-ck}.
\]
Moreover, denoting
\[
  \mathcal{L}_1=\{\widetilde{\nabla}\widetilde{\Phi}_1=0\} \qquad
  \qquad \mathcal{L}_2=\{\widetilde{\nabla}\widetilde{\Phi}_2=0\}
  \qquad \qquad \mathcal{L}=\{\widetilde{\nabla}\widetilde{\Phi}=0\},
\]
one has
\[
  \mathcal{L}=\mathcal{L}_1\circ \mathcal{L}_2:=\{(x,\overline{z})\in
  \widetilde{M}\times \overline{\widetilde{M}}, \exists y\in
  \widetilde{M}, (x,\overline{y})\in \mathcal{L}_1,(y,\overline{z})\in
  \mathcal{L}_2\}.
  \]
\end{prop}
  \begin{proof}    
    We will use the analytic stationary phase theorem
    (\cite{sjostrand_singularites_1982}, Théorème 2.8) to study, given
    $(x,z)\in M$, the
    integral
    \begin{multline*}
      I^{V,\Phi_1}_{k}(a_1)\circ
    I^{V,\Phi_2}_k(a_2):(x,\overline{z})\mapsto \\ k^{2d}\int_{W(x,\overline{z})}
    h_0(\Phi_1(x,\overline{y})\otimes
    \Phi_2(y,\overline{z}))^ks(x,\overline{y};k^{-1})s(y,\overline{z};k^{-1})a_1(x,\overline{y};k^{-1})a_2(y,\overline{z};k^{-1})\dd
    v(y).
  \end{multline*}
  Here and in the rest of the proof
  \[
    W(x,\overline{z})=\{y\in M, (x,\overline{y})\in V,(y,\overline{z})\in V\}
  \]
  and $v$ is a volume form.
  
  Let us first study the case where $\Phi_1=\Phi_2=\Psi$ and $x=z$. The real-analytic function
  \[
    y\mapsto h_0(\Psi(x,\overline{y})\otimes
    \Psi(y,\overline{x}))
  \]
  is non-zero; it is equal to $1$ at $y=x$ where its derivative
  vanishes. Its modulus is strictly less than $1$ elsewhere, and as $y$
  tends to $x$, 
  \[
    1-|h_0(\Psi(x,y)\otimes \Psi(y,x))|\sim \dist(x,y)^2.
  \]
  Therefore the conditions to apply analytic stationary phase are met:
  without any change of contour, we have an integral of the form
  \[
    \int_Ue^{k\phi(y)}u(y;k^{-1})\dd y
  \]
  where $U$ is a relatively compact open set, $u$ is a classical analytic symbol on $U$,
  $\phi$ is real-analytic on $U$, ${\rm Re}(\phi)\geq 0$ everywhere
  with equality only at $y=y_0$ which is a critical point of $\phi$
  where $\phi(y_0)=0$.
  and a non-degenerate maximum point of ${\rm Re}(\phi)$; all of this
  data has real-analytic dependence on a parameter $x$.

  Modulo contour deformations and computations of the value of the
  phase at the critical point, all the properties above are stable
  under small analytic deformation of the phase $\phi$, and in
  particular we may deform $(\Psi,\Psi)$ into closeby analytic
  sections $(\Phi_1,\Phi_2)$ and move $(x,z)$ to a small neighbourhood
  of the diagonal. Hence, given general $\Phi_1,\Phi_2,x,z$, there exists
  a contour in $\widetilde{M}$ homotopic to $W(x,z)$ (with boundary
  fixed), on which the function
  \[
    y\mapsto h_0(\Phi_1(x,\overline{y})\otimes \Phi_2(y,\overline{z}))
  \]
  is such that its modulus reaches a unique maximum at a point $y_0$ in a
  non-degenerate way; moreover $y_0$ is a critical point for the
  function.
  
  Applying this contour deformation and
  \cite{sjostrand_singularites_1982}, Théorème 2.8, we obtain, if $(x,z)$ is close
  to the diagonal,
  \[
    I_k^{V,\Phi_1}(a_1)\circ
    I_k^{V,\Phi_2}(a_2)(x,\overline{z})=k^d\Phi(x,\overline{z})^{\otimes k}b(x,\overline{z};k^{-1}).
  \]
  Here, $b$ is the realisation of a classical analytic symbol, which
  continuously depends on $a,\Phi_1,\Phi_2$ for some analytic symbol topologies.
  
  Since the starting integral is $(J,-J)$-holomorphic with respect
  to $(x,\overline{z})$, so are $\Phi$ and $b$; moreover $\Phi(x,\overline{x})$ is close to
  $\Psi(x,\overline{x})=1$ therefore $\Phi$ is close to $\Psi$ everywhere.
  
  If $x$ and $z$ are too far away from the diagonal, then $I^{V,\Phi_1}_{k}(a_1)\circ
    I^{V,\Phi_2}_k(a_2)(x,\overline{z})$ is small anyway; indeed we've already seen
    that, in a fixed size neighbourhood of the diagonal,
    \[
      |h_0(\Psi(x,\overline{y})\otimes
      \Psi(y,\overline{z}))|=|\Psi(x,\overline{y})|_{h_0}|\Psi(y,\overline{z}))|_{h_0}\leq 1-\dist(x,z)^2/2
    \]
    and therefore, under the hypotheses of the claim, if
    $\dist(x,z)^2>4\delta$, then
    \[
      |\Psi(x,\overline{y})|_{h_0}|\Psi(y,\overline{z}))|_{h_0}\leq 1-\dist(x,z)^2/4.
    \]
    Therefore (up to reducing the allowed value of $\delta$) the
    manifold $M\times \overline{M}$ decomposes into a neighbourhood
    $V'$ of the diagonal, where one can apply the stationary phase
    argument above, and its complement set, where $I_k^{V,\Phi_1}(a_1)\circ
    I_k^{V,\Phi_2}(a_2)(x,\overline{z})$ is exponentially small.

    It remains to prove the stated identity between the Lagrangians
    associated with $\Phi_1,\Phi_2,\Phi$.

    Let us first prove that
    \[
      \mathcal{L}_1\circ \mathcal{L}_2\subset \mathcal{L}.
    \]
    To this end, let $(\tilde{x},\tilde{y},\tilde{z})\in \widetilde{M}$, close to each other, and such
    that
    \[
      \widetilde{\nabla}\widetilde{\Phi}_1(\tilde{x},\tilde{y})=0 \qquad
      \qquad \widetilde{\nabla}\widetilde{\Phi}_2(\tilde{y},\tilde{z})=0.
    \]
    The holomorphic extension of \[
      M^3\ni (x,y,z)\mapsto h_0(\Phi_1(x,\overline{y} )\otimes
      \Phi_2(y,\overline{z}))
    \]
    is
    \[
      \widetilde{M}^3\ni (x,\overline{x},y,\overline{y},z,\overline{z})\mapsto
      \widetilde{h_0}(\widetilde{\Phi}_1(x,\overline{y})\otimes
      \widetilde{\Phi}_2(y,\overline{z})).
    \]
    Therefore (because $\widetilde{\nabla}$ is Hermitian for $\widetilde{h_0}$) at the
    considered point one has
    \begin{align*}
      \widetilde{\nabla}_{\tilde{x}}[\widetilde{h_0}(\widetilde{\Phi}_1(x,\overline{y})\otimes
      \widetilde{\Phi}_2(y,\overline{z}))]&=0\\
      \dd_{\tilde{y}}[\widetilde{h_0}(\widetilde{\Phi}_1(x,\overline{y})\otimes
      \widetilde{\Phi}_2(y,\overline{z}))]&=0\\
      \widetilde{\nabla}_{\tilde{z}}[\widetilde{h_0}(\widetilde{\Phi}_1(x,\overline{y})\otimes
      \widetilde{\Phi}_2(y,\overline{z}))]&=0.
    \end{align*}
    In particular, $\widetilde{\Phi}(x,\overline{z})=\widetilde{h_0}(\widetilde{\Phi}_1(x,\overline{y})\otimes
    \widetilde{\Phi}_2(y,\overline{z}))$ (because of the second
    equation), and then, at this point,
    \[
      \widetilde{\nabla}\Phi(x,\overline{z})=0.
    \]
    We now argue that $\mathcal{L}_1\circ
    \mathcal{L}_2=\mathcal{L}$. Because $\Phi_1,\Phi_2,\Phi$ are close
    to $\Psi$, the manifolds $\mathcal{L}_1,\mathcal{L}_2,\mathcal{L}$
    are close to the diagonal. In particular, in $(\widetilde{M}\times
    \widetilde{\overline{M}})^2$, the manifolds $\mathcal{L}_1\times
    \mathcal{L}_2$ and $\{(x,\overline{y},y,\overline{z}),(x,y,z)\in
    \widetilde{M}^3\}$ are transverse. Therefore $\mathcal{L}_1\circ
    \mathcal{L}_2$ and $\mathcal{L}$ have the same dimension, and both
    are close (in analytic regularity) to the diagonal. Therefore the
    inclusion $\mathcal{L}_1\circ \mathcal{L}_2\subset \mathcal{L}$ is
    an equality.
  \end{proof}

One can not only multiply analytic Fourier Integral Operators but also
invert them, as long as their principal symbols are bounded away from zero.

  \begin{prop}\label{prop:inv_FIO}
    Let $V$ be a neighborhood of the diagonal in
  $\widetilde{M}$. Let $\Psi$ denote the phase of the Bergman kernel
  in Proposition \ref{prop:Tian-Zelditch}. For every $r,R,m,c_0,\epsilon>0$, there exists $\delta>0,c>0,C,r',R',m'$
  and a neighborhood $V'$ of the diagonal in $\widetilde{M}$
  such that the following is true.

  For every holomorphic section $\Phi_1$ of $L\boxtimes \overline{L}$
  over $V$ such that
  \[
    \|\Phi_1/\Psi-1\|_{H^r_m(V)}< \delta
  \]
  there exists a holomorphic section $\Phi_2$ of $L\boxtimes
  \overline{L}$ over $V'$ such that
  \[
    \|\Phi_2/\Psi-1\|_{H^{r'}_{m'}(V')}<\epsilon
  \]
  (and in this topology the dependence on $\Phi_1$ is continuous)
  and for every $a_1\in S^{r,R}_m(V)$ holomorphic whose principal
  symbol $a_{1,0}$ satisfies $|a_{1,0}|>c_0$, 
  there exists $a_2\in S^{r',R'}_{m'}$
  depending continuously on $a_1$ and $\Phi_1$, such that
  \[
\left\|
  I^{V,\Phi_1}_k(a_1)I^{V',\Phi_2}_k(a_2)-\Pi_k\right\|_{L^2\to
  L^2}\leq Ce^{-ck}.
    \]
\end{prop}
\begin{proof}
  Let $\mathcal{L}$ be the canonical relation of $\Phi_1$. Let
  \[
    \mathcal{L}^{-1}=\{(x,\overline{y})\in \widetilde{M}\times
    \widetilde{\overline{M}}, (y,\overline{x})\in \mathcal{L}.
  \]
  Then $\mathcal{L}\circ\mathcal{L}^{-1}$ is the diagonal of
  $\widetilde{M}\times \widetilde{\overline{M}}$ (indeed the diagonal
  is clearly included in $\mathcal{L}\circ \mathcal{L}^{-1}$, and
  since $\mathcal{L}$ and $\mathcal{L}^{-1}$ are close to the
  diagonal, they are both graphs, and then so is $\mathcal{L}\circ
  \mathcal{L}^{-1}$).

  Since $\mathcal{L}$ is Bohr-Sommerfeld and the involution
  $(x,\overline{y})\mapsto (y,\overline{x})$ preserves
  $\widetilde{h_0}$, then $\mathcal{L}^{-1}$ is also Bohr-Sommerfeld,
  so that it corresponds to a holomorphic section $\Phi_2$ over a
  neighbourhood $V_1$ of the
  diagonal of $M\times \overline{M}$, by Proposition \ref{prop:Lagr-to-sec}. In some
  analytic norms, $\Phi_2$ depends continuously on $\mathcal{L}^{-1}$,
  which depends continuously on $\mathcal{L}$, which depends
  continuously on $\Phi_1$.

  By Proposition
  \ref{prop:compo_FIO} there exists an analytic symbol $b$ and a
  neighbourhood $V_2$ of the diagonal such that
  \[
    I^{V,\Phi_1}_k(a_1)\circ I^{V_1,\Phi_2}_k(1)=I^{V_2,\Psi}_k(b).
  \]
  We now recall from Proposition \ref{prop:calc_Toeplitz} that
  $I^{V_2,\Psi}_k(b)$ is of the form $T_k^{\rm cov}(r)$ where $r$ is
  an analytic symbol. Moreover, an
  examination of the proof of Proposition \ref{prop:compo_FIO} shows
  that $r$ is obtained from $a_1$ by stationary phase, and therefore
  its principal symbol $r_0$ is bounded away from $0$ if the principal
  symbol of $a_1$ is bounded away from $0$.

  From there, one can apply Proposition \ref{prop:calc_Toeplitz} again
  to obtain that there exists an analytic symbol $q$ such that
  $T_k^{\rm cov}(r)\circ T_k^{\rm cov}(q)=\Pi_k$ up to an
  exponentially small error.

  Now, we apply Proposition \ref{prop:compo_FIO} one last time to
  \[
    I_k^{V_1,\Phi_2}(1)\circ T_k^{\rm cov}(q)
  \]
  which is an analytic Fourier Integral Operator with phase $\Phi_2$,
  since $\mathcal{L}^{-1}\circ {\rm diag}=\mathcal{L}^{-1}$.
\end{proof}
The notion of Lagrangian associated with a phase also allows us to find
the Fubini-Study metric associated with the phase $\Phi$ of a Fourier
integral operator. The Fubini-Study metric associated with
$\Phi$ corresponds to the Kähler potential $\phi$ such that, in the
setting of
Proposition \ref{prop:manip_kernel_ops}, $\Phi'$ is the phase of the
Bergman projector associated with $\phi$. Solving this equation
directly is probably possible using Nash-Moser type arguments
(especially since we understand the differential by Proposition \ref{prop:manip_kernel_ops}), but it
turns out that $\phi$ has a rather direct geometric interpretation
from the Lagrangian $\Lambda$ of $\Phi$. 

\begin{prop}\label{prop:FSk_of_FIO}
  Let $V$ be a neighbourhood of the diagonal in $M\times
  \overline{M}$. Let $\Phi$ be a section of $L\boxtimes \overline{L}$
  over $V$ which is close to the section $\Psi$ of the Bergman kernel
  and which is \emph{self-adjoint}, in the sense that for every
  $(x,y)\in M^2$,
  \[
    \overline{\Phi(x,\overline{y})}=\Phi(y,\overline{x}).
  \]
  Then the Lagrangian
  $\Lambda=\{\widetilde{\nabla}\widetilde{\Phi}=0\}$ is close to
  $\{(x,y,x,y),(x,y)\in \widetilde{M}\}$ and invariant under the
  symmetry $(x,z,w,y)\mapsto (y,w,z,x)$. In
  particular, the intersection of $\Lambda$ with the ``1=4 diagonal''
  $\{y=x\}$ is of the form $\{(x,\kappa(x),\kappa(x),x),x\in M\}$, for some
  $J$-holomorphic diffeomorphism $\kappa$.

  Let $\phi$ be a Kähler potential corresponding to the initial
  Kähler structure pulled back by $\kappa$ (unique up to an additive constant). Then, with the notations of
  Proposition \ref{prop:manip_kernel_ops}, $\Phi'$ is the phase of the
  Bergman projector associated with $\phi+C_0$ for some $C_0\in \R$. In particular, for every
  analytic symbol $a$ holomorphic on $V$ such that
  $a(x,x;k^{-1})\in \R$ and $a_0(x,x)>0$ for every $x\in M$, the operator
  $I_k^{V,\Phi}(a)$ belongs to $S^{++}$ and
  \[{\rm FS}_k(I_k^{V,\Phi}(a))=\phi+C_0+O_{C^{\infty}}(k^{-1}).\]
\end{prop}
\begin{proof}
  Recall the notations of Proposition \ref{prop:manip_kernel_ops} but
  switch the roles played by $\omega_0$ and $\omega_{\phi}$, so as to
  obtain, for every holomorphic section $\Phi_1$ close to $\Psi_0$, a
  holomorphic section $\Phi_2$ close to $\Psi_0$ such that \[\langle u,
  I_k^{V,\Phi_2}(a)v\rangle_{{\rm Hilb}_k(0)}\approx \langle
  u,I_k^{V,\Phi_1}(b)v\rangle_{{\rm Hilb}_k(\phi)}.\]

  Suppose now that $\Phi_1$ is the phase of the Bergman kernel
  for $\phi$. Then the section $\Phi_2$ is self-adjoint; moreover,
  in a Hermitian chart associated with a local Kähler potential
  $\psi_0$, its holomorphic extension to $\widetilde{M}$ reads as
  \[
    \Phi_2(x,\overline{x},y,\overline{y})=\exp\left[-\tfrac{\psi_0(x,\overline{x})}{2}+\psi_0(x,\overline{z})-\psi_0(z,\overline{z})-\phi(z,\overline{z})+\psi_0(z,\overline{y})-\tfrac{\psi_0(y,\overline{y})}{2}\right],
\]
where the point $(z,\overline{z})\in \widetilde{M}$ is determined by
$(x,\overline{y})$ as the unique critical point of the expression
above with respect to $(z,\overline{z})$. Consequently, in this chart,
\begin{align*}
  \nabla_x\log\Phi_2&=\partial \psi_0(x,\overline{z})-\partial
                      \psi_0(x,\overline{x})=\partial
                      \psi_{\phi}(z,\overline{z})- \partial \psi_0(x,\overline{x})\\
  \overline{\nabla}_y\log\Phi_2&=\overline{\partial}\psi_0(z,\overline{y})-\overline\partial\psi_0(y,\overline{y})=\overline\partial\psi_{\phi}(x,\overline{z})-\overline\partial\psi_0(y,\overline{y}).
\end{align*}
The first identity on each line yields
$\Lambda_{\Phi_2}=\{(x,\overline{z},z,\overline{y})\}$, and the second
yields the claimed link between $\Lambda_{\Phi_2}$ and the pulled-back
metric.

To conclude, given $\Phi$ close to $\Psi$ and self-adjoint, there
exists a Kähler potential $\psi$ such that, in the notations of
Proposition \ref{prop:manip_kernel_ops}, the Lagrangian of $\Phi'$ is
the identity. This means that $\Phi'$ coincides with the phase
$\Psi_{\phi}$ of the appropriate Bergman kernel up to a multiplicative
factor, so that
\begin{equation}\label{eq:T_kcov_up_to_mult}
  \langle u,I^{V,\Phi'}_k(b)v\rangle_{{\rm
      Hilb}_k(\phi)}=e^{kC_0}\langle u, T_k^{{\rm
      cov},\phi}(b)v\rangle_{{\rm
      Hilb}_k(\phi)}.
\end{equation}
The sesquilinear form associated with $I_k^{V,\Phi}(a)$ is symmetric if and only if $a$ is
real-valued on the diagonal (that is, if and only if
$\overline{a(x,y)}=a(y,x)$). In turn, this holds if and only if $b$ is
real-valued on the diagonal. By the above considerations and analytic
stationary phase, the principal symbols of $a$ and $b$ are related by
a formula of the form $b_0(z,z)=v(z)a_0(x,x)$, where $z$ is the critical point
above and $v:M\to \C^*$. Since $a$ is real if and only if $b$ is real,
$v$ is real-valued, and then since $v=1$ when $\phi=0$ and
$\mathcal{H}$ is connected, we obtain that $v>0$. This allows to
conclude: $a$ is real with $a_0>0$, if and only if the corresponding
sesquilinear form is definite positive, and then, by
\ref{eq:T_kcov_up_to_mult} and Remark \ref{rem:FS_k-Tkcov}, the proof is complete.
\end{proof}

\section{Approximate geodesics}
\label{sec:gronw-lemma-appr}

It turns out that all the objects on $\mathcal{B}_k$ considered in the
introduction are analytic FIOs close to identity on $(M,J,\omega_0)$, provided the
associated geometric data is close to $\omega_0$. Using the techniques
developed in Section \ref{sec:analyt-four-integr}, we can then prove the
main claims.

\subsection{Geodesics as Fourier Integral Operators}
\label{sec:geodesics-as-fourier}

\begin{prop}\label{prop:Hilbk_are_FIO}
  Let $\phi\in \mathcal{H}$ be analytic and close to $0$. Then
  ${\rm Hilb}_k(\phi)$ is an analytic Fourier Integral Operator close to identity, up to
  $O(e^{-ck})$.
\end{prop}
\begin{proof}
  The proof consists simply in reverting the roles played by
  $\omega_0$ and $\omega_{\phi}$ in Proposition
  \ref{prop:manip_kernel_ops}, and applying this result to the Bergman
  kernel $\Pi_k^{\phi}$, known to be a Fourier Integral Operator for
  $\omega_{\phi}$ by Proposition \ref{prop:calc_Toeplitz}.
\end{proof}

We now turn to what is in fact the technical core of this article,
namely that Bergman rays (i.e. imaginary time Schrödinger propagators)
are, in short time, analytic FIOs close to identity.
\begin{prop}\label{prop:propag_is_FIO}
  Let $f\in C^{\omega}(M,\R)$. Then, for $t\in \C$ small, $e^{tkT_k^{\rm cov}(f)}$ is
  an analytic Fourier Integral Operator close to identity.
\end{prop}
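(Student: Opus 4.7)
The plan is to construct $e^{tkT_k^{\rm cov}(f)}$ via a WKB ansatz
\[
U_{\rm app}(t) = I_k^{V,\varphi(t)}(a(t))
\]
and then to compare it with the true propagator by a Duhamel argument at exponential precision. Inserting the ansatz into the ODE $\partial_t U = kT_k^{\rm cov}(f)U$ and expanding the right-hand side via the composition formula of Proposition \ref{prop:compo_FIO}, one obtains again an analytic FIO close to identity with phase $\varphi(t)$ (since $T_k^{\rm cov}(f)$ has trivial canonical relation) whose symbol can be computed by analytic stationary phase. Matching powers of $k$ against $\partial_t U_{\rm app}$ yields an eikonal (Hamilton--Jacobi) equation for $\varphi$ at order $k^1$ and a linear transport equation for $a$ at order $k^0$, with initial data $\varphi(0) = 0$ and $a(0) = 1$.

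For the phase, I would observe that geometrically the equation says that the Lagrangians $\mathcal{L}(t) = \{\nabla(\Psi e^{\varphi(t)})=0\}$ are generated from the identity Lagrangian by the Hamiltonian flow of the holomorphic extension of $f$ acting on one factor of $\widetilde M \times \widetilde M$. Since $f$ is real-analytic, this holomorphic flow is well-defined on a neighborhood $V$ of $\diag(M)$ for some short time interval $[0,t_1]$ independent of $k$; on this interval $\mathcal{L}(t)$ remains a graph close to the identity, its Bohr--Sommerfeld class vanishes by continuity, and the corresponding phase $\varphi(t)$ is well-defined by parallel transport and depends continuously on $t$ with values in some $H^r_m(V)$. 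With $\varphi(t)$ in hand, the transport equation for $a$ is linear with analytic coefficients, and a Picard iteration within the symbol spaces $S^{r',R'}_{m'}(V')$, using the continuity estimate \eqref{eq:product_norm_control}, produces an analytic symbol $a(t)$ on a possibly shorter interval $[0,t_2]$. By construction,
\[
\partial_t U_{\rm app}(t) = kT_k^{\rm cov}(f)U_{\rm app}(t) + R_k(t), \qquad \|R_k(t)\|_{L^2\to L^2}\leq Ce^{-ck}.
\]

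To conclude, for real-valued $f$ the operator $T_k^{\rm cov}(f)$ is self-adjoint up to a subprincipal correction of order $k^{-1}$ (Proposition \ref{prop:calc_Toep_subp}), so that $\|e^{tkT_k^{\rm cov}(f)}\|_{L^2 \to L^2}\leq e^{Ctk}$. Duhamel's formula applied to $U_{\rm app}(t)-e^{tkT_k^{\rm cov}(f)}\Pi_k$ then gives an error $O(te^{Ctk}\cdot e^{-ck})$, which is exponentially small for $t<c/C$; taking $t_0=\min(t_1,t_2,c/(2C))$ yields that $e^{tkT_k^{\rm cov}(f)}$ agrees with the analytic FIO $U_{\rm app}(t)$ up to an exponentially small remainder. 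The main obstacle is the Hamilton--Jacobi step: one must show that the holomorphic Hamiltonian flow of $f$ remains well-defined on a fixed neighborhood of $\diag(M)$ in $\widetilde M \times \widetilde M$, and that the resulting phase and symbol stay within the smallness and analytic regularity regimes demanded by the composition theorem, all uniformly in $k$. The Duhamel step is comparatively soft, but the resulting time window further shrinks $t_0$, which is why only short-time existence can be expected from this approach.
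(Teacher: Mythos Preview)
Your strategy---construct the phase geometrically from the complex Hamiltonian flow, solve a transport equation for the symbol in an analytic class, then close by Duhamel---is exactly the paper's. Two organizational differences are worth flagging.

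First, the paper does not verify the eikonal equation by matching powers of $k$ directly. Instead it proves (Lemma \ref{prop:identity_principal_symbol}) that $U_0(-t)\bigl(\partial_tU_0(t)-kT_k^{\rm cov}(f)U_0(t)\bigr)$ is a covariant Toeplitz operator of order $0$, by observing that for \emph{imaginary} $t$ the Lagrangian is the complexification of a real one and the usual smooth-category parametrix for the Schr\"odinger propagator already gives the principal-symbol identity; holomorphic dependence in $t$ then extends it. This sidesteps writing down and solving a Hamilton--Jacobi equation in the complex domain.

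Second, and more substantively, your Picard step invokes the stability estimate \eqref{eq:product_norm_control}, but that estimate is for the covariant product $\#$ at phase $0$; your transport equation involves the composition $T_k^{\rm cov}(f)\circ I_k^{V,\varphi(t)}(\,\cdot\,)$ at phase $\varphi(t)$, for which Proposition \ref{prop:compo_FIO} only gives continuity with a loss $(r,R,m)\to(r',R',m')$. Iterating such a map does not obviously stay in a fixed symbol class. The paper's remedy is to pre-multiply by $e^{-tkT_k(f)}$ and study $e^{-tkT_k(f)}U_0(t)$, which satisfies an ODE whose right-hand side is multiplication by a \emph{phase-$0$} operator $T_k^{\rm cov}(F(t))$; then Picard--Lindel\"of runs in a single space $S^{2r,2R}_m$ precisely via \eqref{eq:product_norm_control}. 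The Remark following the proof singles this point out: it is essential that multiplication by a more regular symbol leaves an analytic class invariant, and this is only established for $\#$. Your Duhamel with the exponentially growing group is fine (the paper also absorbs factors $e^{ck/4}$ by shrinking $t$), but without the reduction to phase $0$ the transport step has a gap.
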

The proof of this Proposition occupies the rest of Subsection
\ref{sec:geodesics-as-fourier}.

We begin by identifying the phase of the FIO. To begin with, let
$\kappa_t$ be the time flow of the Hamiltonian $if$ on
$\widetilde{M}$. In short times, the graph $\mathcal{L}(t)$ of
$\kappa_t$ is a complex Lagrangian, close to identity.

\begin{prop}\label{prop:Lt-BS}
  For $t$ small, the Lagrangian $\mathcal{L}(t)$ is Bohr-Sommerfeld.
\end{prop}
\begin{proof}Since $\mathcal{L}(t)$ is a graph, loops in
$\mathcal{L}(t)$ are of the form $\{(\gamma(s),\overline{\kappa_t(\gamma(s))}),
s\in [0,1]\}$ for some loop $\gamma:[0,1]\to \widetilde{L}$. The
Bohr-Sommerfeld index of this curve is then the Bohr-Sommerfeld index
of $\gamma$ minus that of $\kappa_t(\gamma)$. By Stokes' theorem, this
difference is the integral of $\widetilde{\omega}$ over a surface with
boundaries $\gamma$ and $\kappa_t(\gamma)$. To prove that this
integral is zero, we remark that it is equal to $0$ when $t=0$, and
moreover the derivative of this
integral with respect to $t$ is
$\int_{\kappa_t(\gamma)}\iota_{X_{if}}\omega=i\int_{\kappa_t(\gamma)}\dd
f = 0$.
\end{proof}

Moreover, (in
short time, and restricted to the vicinity of $M$) it satisfies the
group morphism property
\[
  \mathcal{L}(t)\circ \mathcal{L}(s)=\mathcal{L}(t+s).
\]
By Proposition \ref{prop:Lagr-to-sec}, to this family of Lagrangians corresponds a family of holomorphic
sections $\Phi(t)$, with $\Phi(0)=\Psi$. Then, by Proposition
  \ref{prop:compo_FIO}, composition of an
analytic FIO with phase $\Phi(t)$ and one with phase $\Phi(s)$
will be an analytic FIO with phase $\Phi(t+s)$, up to an
exponentially small error, and provided that $t$ and $s$ are small.

Let us introduce a neighborhood $V$ of the diagonal in
$\widetilde{M}$ and a first candidate for the propagator:
\[
  U_0(t)=I_k^{V,\Phi(t)}(1).
\]

The fact that the canonical relation of $U_0(t)$ is the flow of $if$
translates into the following identity.
\begin{lem}\label{prop:identity_principal_symbol}There exists $c>0$ and, for $t$ small, there exists an analytic symbol $g(t)$ (with
  holomorphic dependence on $t$) such that
  \[
    U_0(-t)\left(\frac{\partial U_0(t)}{\partial t}-T_k^{\rm
        cov}(kf)U_0(t)\right)=T_k^{\rm cov}(g(t))+O(e^{-ck}).
    \]
\end{lem}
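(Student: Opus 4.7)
The strategy is to show that both $\partial_t U_0(t)$ and $T_k^{\rm cov}(kf)\,U_0(t)$ are analytic FIOs with the common phase $\varphi(t)$ whose \emph{a priori} $O(k)$ symbols agree at leading order along $\mathcal{L}(t)$, so that their difference is a genuine $O(1)$ analytic FIO with phase $\varphi(t)$. Composing with $U_0(-t)$ on the left and invoking Proposition \ref{prop:compo_FIO}, the canonical relation of the result is $\mathcal{L}(-t)\circ\mathcal{L}(t)=I$ by the group property $\kappa_{-t}\circ\kappa_t=\mathrm{id}$; any FIO with identity canonical relation is precisely a covariant Toeplitz operator modulo $O(e^{-ck})$.

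Concretely, the kernel $U_0(t)(x,y)=k^d\mathds{1}_V(x,y)\Psi^{\otimes k}(x,y)e^{k\varphi(t)(x,y)}s(x,y;k^{-1})$ depends on $t$ only through $\varphi(t)$, so differentiation gives
\[
\partial_t U_0(t)=k(\partial_t\varphi(t))\cdot U_0(t),
\]
i.e. an FIO with phase $\varphi(t)$ and $O(k)$ symbol $k\partial_t\varphi(t)$. Applying Proposition \ref{prop:compo_FIO} to $T_k^{\rm cov}(kf)\circ U_0(t)$ yields, via analytic stationary phase, an FIO with the same phase $\varphi(t)$; the leading term of its symbol in $k$ is $kf$ evaluated at the critical point of the composed phase, which projects onto the left endpoint of the corresponding point of $\mathcal{L}(t)$.

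Matching the two $O(k)$ terms is the Hamilton--Jacobi equation associated with the flow of the Hamiltonian $if$: since $\mathcal{L}(t)$ is the graph of $\kappa_t$ and $\varphi(0)=0$, the generating function satisfies $\partial_t\varphi(t)|_{\mathcal{L}(t)}=f\circ\pi_L$, where $\pi_L$ denotes projection onto the left factor. The discrepancy of the two $O(k)$ symbols off $\mathcal{L}(t)$ vanishes to first order in the distance to $\mathcal{L}(t)$ and can therefore be traded, in the analytic FIO calculus, for a derivative of the symbol at the cost of one power of $k^{-1}$; the resulting quantity is a bona fide analytic symbol $h(t)\in S^{r,R}_m(V)$ depending holomorphically on $t$. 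One then has
\[
\partial_t U_0(t)-T_k^{\rm cov}(kf)U_0(t)=I^{V,\varphi(t)}_k(h(t))+O(e^{-ck}),
\]
and a final application of Proposition \ref{prop:compo_FIO} to $U_0(-t)\circ I^{V,\varphi(t)}_k(h(t))$ produces an FIO whose phase $\varphi(-t)\circ\varphi(t)$ is constant (the associated Lagrangian being $I$) and whose symbol $g(t)$, obtained by analytic stationary phase and renormalisation by $s^{-1}$, depends holomorphically on $t$.

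The main obstacle is this principal-symbol matching step. In the analytic FIO calculus the symbol of $I^{V,\varphi(t)}_k(a)$ is only well-defined on $\mathcal{L}(t)$ up to terms absorbable into the symbol at lower order; making this reabsorption rigorous while tracking the norms of Definition \ref{def:analytic-norms} — in particular, verifying that the absorbed terms remain in some $S^{r,R}_m(V)$ uniformly for $t$ in a small complex disc around $0$ — is what simultaneously produces the exponential remainder $O(e^{-ck})$ and the holomorphic family $g(t)$.
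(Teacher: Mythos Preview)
Your proposal is correct but takes a genuinely different route from the paper. The paper first composes both $k^{-1}\partial_t U_0(t)$ and $T_k^{\rm cov}(f)U_0(t)$ on the left with $U_0(-t)$, so that each becomes a covariant Toeplitz operator with holomorphic dependence on $t$; it then establishes equality of their principal symbols not by direct computation but by \emph{analytic continuation from imaginary time}. For $t=i\tau$ the Lagrangian $\mathcal{L}(i\tau)$ is the complexification of the real graph of the Hamiltonian flow of $f$, and the existing smooth parametrix for the Schr\"odinger propagator (cited from \cite{zelditch_pointwise_2018,charles_quantum_2020}) already has phase exactly $\varphi(i\tau)$, which forces the principal symbols to agree on the imaginary axis; holomorphy in $t$ then propagates the identity to real small $t$. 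Your argument instead verifies the match head-on via the Hamilton--Jacobi equation for $\varphi(t)$, which is more self-contained but hinges on the ``off-Lagrangian absorption'' step you correctly flag as the main obstacle. That step is genuinely delicate in the kernel formulation used here: two amplitudes agreeing only on $\mathcal{L}(t)$ give different kernels, and the promised gain of $k^{-1}$ is only visible at the level of operators on $H^0(M,L^{\otimes k})$, i.e.\ after a further composition with $\Pi_k$ and another stationary-phase argument with norm tracking. The paper's ordering---compose with $U_0(-t)$ first, so both sides are Toeplitz and their principal symbols are unambiguous functions on $M$---bypasses this issue completely, at the price of importing the smooth-case result from the literature.
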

\begin{proof}
  By Proposition \ref{prop:compo_FIO}, both $k^{-1}U_0(t)\frac{\partial
    U_0(t)}{\partial t}$ and $T_k^{\rm
        cov}(f)U_0(t)$ are analytic covariant Toeplitz operators,
      whose symbol has holomorphic dependence on $t$.
      One only has to show that their principal symbols are equal;
      however, this is true for imaginary time $t$, where
      $\mathcal{L}(t)$ is the complexification of a real
      Lagrangian. Indeed, in this setting, one has a parametrix for
      the propagator at any order
      \cite{zelditch_pointwise_2018,charles_quantum_2020} whose phase
      is exactly $\phi(t)$. Since the principal symbols agree for
      imaginary time, and have holomorphic dependence on $t$, they
      agree everywhere. This concludes the proof.
\end{proof}
Since $\mathcal{L}(t)\circ \mathcal{L}(-t)$ is the diagonal,
Proposition \ref{prop:compo_FIO} gives
\[
  U_0(-t)U_0(t)=T_k^{\rm cov}(b(t))+O(e^{-ck}),
\]
for some analytic symbol $b(t)$ with nonvanishing principal symbol with analytic
dependence on $t$, the same constant $c>0$ as in Lemma
\ref{prop:identity_principal_symbol} and all $t$ small enough.

For $t$ small enough, one has also in operator norm, by
  Proposition \ref{prop:bound-op-norm},
\[
  \|U_0(t)\|+\|(U_0(t))^{-1}\| \leq Ce^{\frac c4 k}
\]
and
\[
  \|e^{tkT_k(f)}\|\leq Ce^{\frac c4 k}.
\]
Hence, for some formal analytic symbols $g(t)$ and $F(t)$,
\begin{align*}
    \frac{\partial}{\partial
  t}\left[e^{-tkT_k(f)}U_0(t)\right]&=e^{-tkT_k(f)}\left[-tkT_k(f)U_0(t)+\partial_tU_0(t)\right]\\
  &=e^{-tkT_k(f)}(U_0(-t))^{-1}T_k^{\rm cov}(g(t))+O(e^{-\frac c2
    k})\\
  &=e^{-tkT_k(f)}U_0(t)(T_k^{\rm cov}(b(t)))^{-1}T_k^{\rm
    cov}(g(t))+O(e^{-\frac c2 k})\\
  &=e^{-tkT_k(f)}U_0(t)T_k^{\rm cov}(F(t))+O(e^{-\frac c2 k}),
\end{align*}
At $t=0$, one has of course $e^{-tkT_k(f)}U_0(t)=T_k^{\rm
  cov}(1)+O(e^{-ck})$. Uniformly for $t$ close to $0$, $F(t)$ belongs
to some analytic class $S^{r,R}_m(V')$ for some neighborhood $V'$ of
the diagonal.

By Proposition \ref{prop:calc_Toeplitz}, we are able to apply the
Picard--Lindelöf theorem to the following (linear) Cauchy problem:
\[
  a'(t)= a(t)\sharp F(t) \qquad \qquad a(0)=1 \qquad \qquad a(t)\in
  S^{2r,2R}_m(V'),
\]
where $\sharp$ denotes the symbol product of covariant
Berezin--Toeplitz operators. There exists a unique solution $a(t)$ to
this Cauchy problem, and one has, for some $c'>0$,
\[
  \frac{\partial}{\partial t}T_k^{\rm cov}(a(t))=T_k^{\rm
    cov}(a(t))T_k(F(t))+O(e^{-c'k}).
  \]
  The true solution of the equation
  \[
    A'(t)=A(t)T_k^{\rm cov}(F(t)) \qquad \qquad A(0)=\Pi_k
  \]
  is uniformly bounded (in operator norm), along with its inverse, as
  $k\to +\infty$. Thus, by the Duhamel formula, one has both
  \begin{align*}
    T_k^{\rm cov}(a(t))&=A(t)+O(e^{-c'k})\\
    e^{-tkT_k(f)}U_0(t)&=A(t)+O(e^{-c'k})
  \end{align*}
  and consequently, for small times,
  \begin{align*}
    e^{tkT_k(f)}&=(T_k^{\rm
    cov}(a(t)))^{-1}U_0(t)+O(e^{-\frac{c'}{2}k})\\
    &=I_k^{V,\varphi(t)}(b(t))+O(e^{-\frac{c'}{2}k})
  \end{align*}
  where we applied again Propositions \ref{prop:compo_FIO} and
  \ref{prop:inv_FIO}. This concludes the proof of Proposition \ref{prop:propag_is_FIO}.
\begin{rem}
  The ability to find an analytic symbol in the propagator for
  Proposition \ref{prop:propag_is_FIO} relies on an application of the Picard-Lindelöf
  theorem on a space of analytic symbols; it is essential that
  multiplication by a more regular symbol leaves invariant an analytic class.
\end{rem}

\subsection{Geodesic equations}
\label{sec:geodesic-equations}
To conclude the proof of Theorem \ref{thr:FIOs}, it remains to show
that the phases of the Fourier Integral Operators appearing in
Propositions \ref{prop:Hilbk_are_FIO} and \ref{prop:propag_is_FIO} are
identical whenever $\phi(t)$ is a real-analytic Mabuchi geodesic with
$\phi(0)=0$ and $f=\dot{\phi}(0)$.

\begin{prop}\label{prop:same_phase}
  Let $V$ be a neighbourhood of the diagonal in $M\times
    \overline{M}$. Let $\dot{\phi}(0)\in C^{\omega}(M)$. Let $\phi(t)$ be the
 Mabuchi geodesic with initial value $(0,\dot{\phi}(0))$ and let $\Psi(t),a(t)$ be respectively a
  holomorphic section of $L\boxtimes \overline{L}$ over $V$ and a holomorphic
  analytic symbol on $V$ such that, as in Proposition
  \ref{prop:Hilbk_are_FIO},
  \[
    {\rm Hilb}_k(\phi(t))=I_k^{V,\Psi(t)}(a(t))+\O(e^{-ck})
  \]
  for some $c>0$, uniformly for $t$ in a neighbourhood of $0$.

  Let also $\Phi(t),b(t)$ be respectively a
  holomorphic section of $L\boxtimes \overline{L}$ over $V$ and a holomorphic
  analytic symbol on $V$ such that, as in Proposition
  \ref{prop:propag_is_FIO},
  \[
    \exp(tkT_k^{\rm cov}(-\dot{\phi}(0)))=I_k^{V,\Phi(t)}(b(t))+\O(e^{-ck})
  \]
  for some $c>0$, uniformly for $t$ in a neighbourhood of $0$.

  Then $\Phi=\Psi$.

\end{prop}
\begin{proof}
    Letting $A(t)=\exp(tkT_k^{\rm cov}(-\dot{\phi}(0)))$, then
    $t\mapsto A(t)$ is the representation as ${\rm
      Hilb}_k(0)$-self-adjoint operators of a geodesic $\gamma_k(t)$ in
    $\mathcal{B}_k$. Representants $A^{\psi_{\rm ref}}(t)$ of this geodesic at any base point
    ${\rm Hilb}_k(\psi_{\rm ref})$ satisfy
    \begin{equation}\label{eq:geod_based_psi}
      \ddot{A}^{\psi_{\rm ref}}(t)=\dot{A}^{\psi_{\rm ref}}(t)(A^{\psi_{\rm
          ref}}(t))^{-1}\dot{A}^{\psi_{\rm ref}}(t).
    \end{equation}
    By Proposition \ref{prop:manip_kernel_ops} and \ref{prop:propag_is_FIO}, if $\psi_{\rm ref}$ is real-analytic and close to $0$
    then $A^{\psi_{\rm ref}}$ and its time derivatives are analytic
    Fourier Integral Operators close to identity.

    Moreover, for any $t\in \R$ close to $0$, since $\exp(tkT_k^{\rm
      cov}(-\dot{\phi}(0)))$ is self-adjoint, then $\Phi(t)$ is
    self-adjoint, meaning that
    \[
      \overline{\Phi(t,x,y)}=\Phi(t,y,x).
    \]
    Following Proposition \ref{prop:FSk_of_FIO}, there exists a path
    of elements of $\mathcal{H}$, denoted $t\mapsto \phi_1(t)$, such
    that, for every $t$,
    \[
      A^{\phi_1(t)}(t)=T_k^{\rm cov,\phi_1(t)}(g(t))
    \]
    for some real-analytic symbol $g(t)$. More generally, one can
    write
    \[
      A^{\phi_1(T)}(t)=I_k^{\Phi_t^T,V}(g(t,T))
    \]
    where $\Phi_T^T$ is the phase of the Bergman kernel associated
    with $\phi_1(T)$ and $g$ is a real-analytic symbol.

    By successive differentiation, we obtain
    \begin{align*}
      \frac{\dd}{\dd t}A^{\phi(T)}(t)|_{t=T}&=kT_k^{\rm
                                              cov,\phi_1(T)}(\tfrac{\dd
                                              \log \Phi^T_t}{\dd
                                              t}g|_{t=T}+k^{-1}\tfrac{\dd
                                              g}{\dd t}|_{t=T})\\
      \frac{\dd^2}{\dd t^2}A^{\phi(T)}(t)|_{t=T}&=k^2T_k^{\rm cov,\phi_1(T)}[(\tfrac{\dd
                                                  \log \Phi^T_t}{\dd
                                                  t}g|_{t=T})^2+k^{-1}\tfrac{\dd^2
                                                  \log \Phi^T_t}{\dd
                                                  t^2}g|_{t=T}+2k^{-1}\tfrac{\dd
                                                  \log \Phi^T_t}{\dd
                                                  t}|_{t=T}\tfrac{\dd
                                                  g}{\dd t}|_{t=T}+k^{-2}\tfrac{\dd^2
                                                  g}{\dd t^2}|_{t=T}].
    \end{align*}
   Applying now the geodesics equation \eqref{eq:geod_based_psi} and
   the subprincipal calculus identity \eqref{eq:Charles_1}, we obtain
   that
   \[
     \left. \tfrac{\dd^2 \log \Phi_t^T}{\dd t^2}\right|_{t=T}= \left\|\partial \tfrac{\dd
        \log \Phi^T_t}{\dd
        t}|_{t=T}\right\|_{\phi_1(T)}^2.
  \]
  Applying \eqref{eq:var_phase}, and Proposition
  \ref{prop:calc_Toeplitz}, there holds $ \frac{\dd \log \Phi^T_t}{\dd
    t}|_{t=T}=-\dot{\phi}_1(t)$, and then by
  \eqref{eq:var2_phase},
  \[
    \frac{\dd^2\phi_1}{\dd t^2}=-\left.\frac{\dd^2\Phi^T_t}{\dd
        t^2}\right|_{t=T}+2\|\partial \dot{\phi}_1\|_{\phi_1}^2.
  \]
  Thus $\phi_1$ satisfies the equation
  \[
    \ddot{\phi_1}(t)= \|\partial\dot{\phi_1}\|_{\phi_1}
  \]
  which is exactly the Mabuchi geodesic equation \eqref{eq:Mabuchi_geod_eq}. At $t=0$ one has
  $\frac{\dd A^{\psi_0}}{\dd t}=kT_k^{\rm cov}(-\dot{\phi}(0))$ so
  that $\dot{\phi}_1(0)=\dot{\phi}(0)$. By the Cauchy-Kovalevskaya
  theorem, one can conclude that $\phi_1=\phi$, and therefore by
  Proposition \ref{prop:manip_kernel_ops}, that $\Psi(t)=\Phi(t)$.
\end{proof}

We are now also in position to complete the proof of Theorem \ref{thr:closeness_geodesics_IVP}.

  \begin{prop}\label{prop:short_time}
    Given a real-analytic Kähler manifold $(M,J,\omega_0)$and given $r>0,C_0>0,m\in \R$,
    there exists $\epsilon>0$ and $C_1>0$ such that, for all
    $(\phi_1,\dot{\phi}_1)\in T\mathcal{H}$ such that
    \[
      \|\phi_1\|_{H(r,m)}+\|\dot{\phi}_1\|_{H(r,m)}\leq C_0
    \]
    the Mabuchi geodesic equation with initial conditions
    $(\phi_1,\dot{\phi}_1)$ has a solution $\phi(t)$ as a real-analytic curve on
    times $(-\epsilon,\epsilon)$. Moreover, for every $k\in \N$, the
    geodesic $\gamma_k$ on $\mathcal{B}_k$ whose initial condition is $({\rm
      Hilb}_k(\phi_1),\dd {\rm Hilb}_k(\dot{\phi}_1))$ satisfies, for
    all $t\in (-\epsilon,\epsilon)$,
    \[
      \dist_{\mathcal{B}_k}(\gamma_k(t),{\rm Hilb}_k(\phi(t)))\leq
      C_1k^{\frac d2}
    \]
    and
    \[
      \dist_{\mathcal{H}}({\rm FS}_k(\gamma_k(t)),\phi(t))\leq
      C_1k^{-1}.
      \]
    \end{prop}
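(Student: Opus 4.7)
The plan is to identify both $c_k(t) := {\rm Hilb}_k(\phi(t))$ and $\gamma_k(t)$ as analytic Fourier Integral Operators close to identity, with the \emph{same} phase, and then to compare their symbols. Short-time existence of the Mabuchi geodesic $\phi(t)$, uniformly over an $H(r,m)$-ball of radius $C_0$, follows from the quantitative analytic Cauchy--Kowalevskaya theorem applied to \eqref{eq:Mabuchi_geod_eq} (as in \cite{mabuchi_symplectic_1987}); this also yields uniform bounds on $\phi(t)$ in some (possibly smaller) analytic class on $(-\epsilon, \epsilon)$.

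By Proposition \ref{prop:Hilbk_are_FIO}, $c_k(t)$ is an analytic FIO close to identity whose phase, by Proposition \ref{prop:Lagr_phase}, is the holomorphic extension $\varphi(t)$ of $\phi(t)$, with principal symbol $1$. By Proposition \ref{prop:propag_is_FIO}, the Bergman propagator $\exp(tk T_k^{\rm cov}(\dot\phi_1))$ is also an analytic FIO close to identity; composing with $c_k(0)$ via Proposition \ref{prop:compo_FIO} (and using Proposition \ref{prop:inv_FIO} as needed) yields $\gamma_k(t) = I_k^{V,\psi(t)}(b(t)) + O(e^{-ck})$ for some analytic phase $\psi(t)$ and symbol $b(t)$, uniform in $t$ and $k$. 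Proposition \ref{prop:same_phase} shows that $\varphi$ and $\psi$ satisfy the same evolution equation on the phase; since $\varphi(0) = \psi(0)$ and $\dot\varphi(0) = \dot\psi(0)$ are the holomorphic extensions of $\phi_1$ and $\dot\phi_1$ respectively, uniqueness of the analytic Cauchy problem for the phase forces $\psi(t) = \varphi(t)$ throughout $(-\epsilon, \epsilon)$.

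With both FIOs sharing the same phase, Propositions \ref{prop:compo_FIO} and \ref{prop:inv_FIO} allow one to write the ``ratio'' $c_k(t)^{-1/2} \gamma_k(t) c_k(t)^{-1/2}$ as a covariant Berezin--Toeplitz operator $T_k^{\rm cov}(r(t))$, with $r(t) - 1$ bounded in the analytic symbol class, uniformly in $k$ and $t$. Proposition \ref{prop:quantum_class_corresp} then gives $\tr((T_k^{\rm cov}(r(t)) - I)^2) = O(k^d)$, which bounds the squared geodesic distance in $\mathcal{B}_k$ and yields $\dist_{\mathcal{B}_k}(c_k(t), \gamma_k(t)) = O(k^{\frac d 2})$ as required. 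For the Fubini--Study estimate, evaluating $\gamma_k(t)$ on the diagonal and using $\varphi(t)(x, \bar x) = \phi(t)(x)$ gives $\gamma_k(t)(x,x) = k^d e^{k\phi(t)(x)} (s \cdot b(t))(x,\bar x; k^{-1}) + O(e^{-ck})$, whence $\frac{1}{k} \log \gamma_k(t)(x,x) = \phi(t)(x) + \frac{d \log k}{k} + O(k^{-1}) = \phi(t)(x) + O(k^{-1} \log k)$.

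The main technical obstacle is to maintain uniform control (in $k$ and $t$) on the analytic symbol $b(t)$ and its inverse along the Bergman geodesic, so that Propositions \ref{prop:compo_FIO} and \ref{prop:inv_FIO} can be applied throughout $(-\epsilon, \epsilon)$ with bounded constants and the symbol classes injections remain valid; this is a Picard--Lindelöf argument in an analytic symbol space (in the spirit of the proof of Proposition \ref{prop:propag_is_FIO}) that must be propagated through the various compositions and inversions. A direct inspection of the subprincipal symbol of $b(t)$ (the computation announced at the end of this section) shows that the principal symbol of $\gamma_k(t) - c_k(t)$ does not vanish in general, so the $k^{\frac d 2}$ rate cannot be improved.
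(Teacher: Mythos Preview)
Your argument is correct and follows essentially the same route as the paper: represent both $c_k(t)$ and $\gamma_k(t)$ as analytic FIOs close to identity, invoke Proposition~\ref{prop:same_phase} to identify their phases, reduce the comparison to a covariant Toeplitz operator, and read off the $O(k^{d/2})$ distance from a trace bound and the $O(k^{-1}\log k)$ Fubini--Study estimate from the diagonal. The only cosmetic differences are that the paper uses the one-sided ratio ${\rm Hilb}_k(\phi(t))\,\gamma_k(t)^{-1}$ rather than your symmetrised $c_k(t)^{-1/2}\gamma_k(t)c_k(t)^{-1/2}$, and is terser about the uniformity in $t$ that you spell out.
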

    \begin{proof}
Let $\phi(t)$ be a geodesic in Mabuchi space, and let
$H(t)$ be the geodesic associated with the projected initial value problem. Then there
exists $\Phi$ and two analytic symbols $a$ and $b$ such that
\begin{align*}
  {\rm Hilb}_k(\phi(t)):(x,y)& \mapsto k^d\Phi^{\otimes
  k}(x,y)a(t,x,y;k^{-1})+O(e^{-ck})\\
  H(t):(x,y)&\mapsto  k^d\Phi^{\otimes
  k}(x,y)b(t,x,y;k^{-1})+O(e^{-ck}).
\end{align*}

In particular, by Proposition \ref{prop:compo_FIO},
\[
  {\rm Hilb}_k(\phi(t))(H(t))^{-1}=T_k^{\rm cov}(r)+O(e^{-c'k}),
\]
where $r$ is an analytic symbol. Hence
\[
  \dist_{\mathcal{B}_k}({\rm
    Hilb}_k(\phi(t)),H(t))=\dist_{\mathcal{B}_k}(I,T_k^{\rm
    cov}(r)+O(e^{-c'k}))=O(k^{\frac d2}).
\]
This concludes the proof of \eqref{eq:close-geods-Bk}.

We now turn to the proof of \eqref{eq:close-geods-FSk}. By
Proposition \ref{prop:Tian-Zelditch},
\[
  {\rm FS}_k({\rm
    Hilb}_k(\phi(t)))=\phi(t)+O_{C^{\infty}}(k^{-1}).
\]
Moreover, since $H(t)$ is a Fourier Integral Operator with the same
phase, by Proposition \ref{prop:FSk_of_FIO} its ${\rm FS}_k$ is the same up
to $O_{C^{\infty}}(k^{-1})$. Noticing that
$\|\phi_1-\phi_2\|_{C^{2}}\gtrsim
\dist_{\mathcal{H}}(\phi_1,\phi_2)$, the proof is complete.
\end{proof}


\section{No analytic geodesic between analytic endpoints}
\label{sec:geom-space-quant}

To conclude this article, we argue for the \emph{non-existence} of a
solution to the boundary value problem in real-analytic regularity,
that is, Conjecture \ref{conj:no_analytic_bd_value}.

Let $E,E'$ be two analytic function spaces. Let $(M,J,\phi_0)$ be a Kähler metric in $E$. Let
$\phi_1\in \mathcal{H}\cap E$ and suppose that $\phi_1$ is
joined to $\phi_0$ by an analytic geodesic $\phi_t$ in $E'$ such that
$\dot{\phi}_t\in E'$.

There exists $\epsilon>0$ such that, for all $t_0\in [0,1]$ and all $f\in B_{E'}(0,\epsilon)$, the
conclusion of Theorem \ref{thr:closeness_geodesics_IVP}
apply up to time $1$ for the Cauchy problem with initial data
$(\phi_t,f)$. Thus, there are well-defined complex Lagrangians
$\mathcal{L}_{t_0}(t-t_0)$ corresponding to the change from $\phi_{t_0}$ to $\phi_t$, for all
$|t-t_0|<\frac{\epsilon}{R}$, where $R=\sup_{t\in
  [0,1]}\|\dot{\phi}_t\|_E'$.

The imaginary time Lagrangians $\mathcal{L}_{t_0}(i\tau)$ can be extended to
$t\in \R$; they are the graphs of the flow of the
time-independent Hamiltonian $\dot{\phi}_{t_0}$. In this respect, they form a closed
set with empty interior amongst the graph of flows of time-dependent
Hamiltonians in $E'$ (this classical result goes as follows: graphs of
flows of time-dependent Hamiltonians generically intersect each other
cleanly, so that their periodic points form a set of dimension $0$;
however periodic points of $\mathcal{L}_{t_0}(i\tau)$ correspond
to closed orbits of $\dot{\phi}_{t_0}$, which form a set of dimension $1$).

Close to $\phi_{t_0}$ in the norm of $E'$, the map which to a Kähler potential associates its
Lagrangian should be a well-defined diffeomorphism. Hence, there exists
$\delta>0$ such that, for all
$t_0$, once $\phi_{t_0}$ is fixed, the functions $\phi_{t_0+i\tau}$, for $|\tau|<2\delta$, belong to a
closed set with empty interior, sitting in the totally real submanifold of
Kähler potentials whose Lagrangians (with respect to $\phi_{t_0}$) are real.

One should be able to conclude that the functions $\phi_{t_0+\tau}$, for
$|\tau|<2\delta$, belong to a closed set with empty interior among
real-valued Kähler potentials. To prove this, the missing link is a
continuous map, near $\phi_{t_0}$, between analytic real-valued Kähler
potentials and Kähler potentials whose Lagrangians are real, and which
sends solutions of the Mabuchi geodesic equation to autonomous
flows. A candidate for such a map, using Berezin--Toeplitz
quantization, is $\lim_{k\to +\infty} {\rm FS}_k\circ\mathcal{J}\circ
{\rm Hilb}_k$, where $\mathcal{J}:SL(d_k,\C)/SU(d_k,\C)\to
SU(d_k,\C)$ is the exponentiation of the multiplication by $i$ between
tangent spaces at ${\rm Hilb}_k(\phi(t_0))$. By Theorem \ref{thr:closeness_geodesics_IVP}, this map is indeed well-defined at the end-point
of an analytic geodesic, but the behaviour of the limit for general
closeby analytic potentials is not clear.

Non-genericity for short times would be enough to prove the
conjecture: if we can prove progressively that the family
$(\phi_{\delta},\phi_{2\delta},\ldots,\phi_{\lfloor
  \delta^{-1}\rfloor\delta},\phi_1)$, belongs to a closed set with
empty interior, then $\phi_1$ itself satisfies this property.

\section{Acknowledgements}
\label{sec:acknowledgements}
This work was supported by the National Science Foundation under Grant No. DMS-1440140 while A. Deleporte was in
residence at the Mathematical Sciences Research Institute in Berkeley,
California, during the Fall 2019 semester, by the Centre National de
la Recherche Scientifique under a PEPS JCJC grant during 2021, as well
as by the NSF grant no. DMS-1810747 for the whole duration of this project.



\bibliographystyle{abbrv}
\bibliography{main}

\begin{thebibliography}{10}

\bibitem{berndtsson_positivity_2009}
B.~Berndtsson.
\newblock Positivity of direct image bundles and convexity on the space of
  {{K{\"a}hler}} metrics.
\newblock {\em Journal of differential geometry}, 81(3):457--482, 2009.

\bibitem{bhattacharyya_exponential_2022}
R.~Bhattacharyya, D.~Burns, E.~Lupercio, and A.~Uribe.
\newblock The exponential map of the complexification of the group of analytic
  {{Hamiltonian}} diffeomorphisms.
\newblock {\em Pure and Applied Mathematics Quarterly}, 18(1):33--70, 2022.

\bibitem{blocki_geodesics_2012}
Z.~Blocki.
\newblock On geodesics in the space of {{K{\"a}hler}} metrics, in
  ''{{Advances}} in {{Geometric Analysis}}'', {{Advanced Lectures}} in
  {{Mathematics}} 21, 2012.

\bibitem{bordemann_toeplitz_1994}
M.~Bordemann, E.~Meinrenken, and M.~Schlichenmaier.
\newblock Toeplitz quantization of {{K{\"a}hler}} manifolds and gl ({{N}}),
  {{N}}{\textrightarrow}{$\infty$} limits.
\newblock {\em Communications in Mathematical Physics}, 165(2):281--296, 1994.

\bibitem{calabi_space_2002}
E.~Calabi and X.-X. Chen.
\newblock The space of {{K{\"a}hler}} metrics {{II}}.
\newblock {\em Journal of Differential Geometry}, 61(2):173--193, 2002.

\bibitem{charles_berezin-toeplitz_2003}
L.~Charles.
\newblock Berezin-{{Toeplitz Operators}}, a {{Semi-Classical Approach}}.
\newblock {\em Communications in Mathematical Physics}, 239(1-2):1--28, Aug.
  2003.

\bibitem{charles_quasimodes_2003}
L.~Charles.
\newblock Quasimodes and {{Bohr-Sommerfeld Conditions}} for the {{Toeplitz
  Operators}}.
\newblock {\em Communications in Partial Differential Equations},
  28(9-10):1527--1566, Jan. 2003.

\bibitem{charles_analytic_2021}
L.~Charles.
\newblock Analytic {{Berezin}}{\textendash}{{Toeplitz}} operators.
\newblock {\em Mathematische Zeitschrift}, 299(1):1015--1035, 2021.

\bibitem{charles_quantum_2020}
L.~Charles and Y.~Le~Floch.
\newblock Quantum propagation for {{Berezin-Toeplitz}} operators.
\newblock {\em arXiv preprint arXiv:2009.05279}, 2020.

\bibitem{chen_space_2000}
X.~Chen.
\newblock The space of {{K{\"a}hler}} metrics.
\newblock {\em Journal of Differential Geometry}, 56(2):189--234, 2000.

\bibitem{chen_geodesic_2020}
X.~Chen, M.~Feldman, and J.~Hu.
\newblock Geodesic convexity of small neighborhood in the space of
  {{K{\"a}hler}} potentials.
\newblock {\em Journal of Functional Analysis}, 279(7):108603, Oct. 2020.

\bibitem{chen_space_2012}
X.~Chen and S.~Sun.
\newblock Space of {{K{\"a}hler}} metrics ({{V}}){\textendash}{{K{\"a}hler}}
  quantization.
\newblock In {\em Metric and Differential Geometry}, pages 19--41. {Springer},
  2012.

\bibitem{chu_c11_2017}
J.~Chu, V.~Tosatti, and B.~Weinkove.
\newblock On the {{C}}{\textsuperscript{1,1}} regularity of geodesics in the
  space of {{K{\"a}hler}} metrics.
\newblock {\em Annals of PDE}, 3(2):1--12, 2017.

\bibitem{deleporte_toeplitz_2018}
A.~Deleporte.
\newblock Toeplitz operators with analytic symbols.
\newblock {\em Journal of Geometric Analysis}, 31:3915--3967, 2021.

\bibitem{deleporte_direct_2022}
A.~Deleporte, M.~Hitrik, and J.~Sj{\"o}strand.
\newblock A direct approach to the analytic {{Bergman}} projection.
\newblock {\em Annales de la Facult{\'e} des Sciences de Toulouse :
  Math{\'e}matiques}, In press, 2022.

\bibitem{donaldson_symmetric_1999}
S.~K. Donaldson.
\newblock Symmetric spaces, {{K{\"a}hler}} geometry and {{Hamiltonian}}
  dynamics.
\newblock {\em Translations of the American Mathematical Society-Series 2},
  196:13--34, 1999.

\bibitem{donaldson_holomorphic_2002}
S.~K. Donaldson.
\newblock Holomorphic {{Discs}} and the {{Complex Mounge-Amp{\`e}re}} equation.
\newblock {\em J. Symplectic Geom.}, 1(2):171--196, 2002.

\bibitem{hezari_off-diagonal_2017}
H.~Hezari, Z.~Lu, and H.~Xu.
\newblock Off-diagonal asymptotic properties of {{Bergman}} kernels associated
  to analytic {{K{\"a}hler}} potentials.
\newblock {\em International Mathematics Research Notices}, rny081, 2018.

\bibitem{hitrik_two_2013}
M.~Hitrik and J.~Sj{\"o}strand.
\newblock Two minicourses on analytic microlocal analysis.
\newblock In {\em Algebraic and {{Analytic Microlocal Analysis}}}, pages
  483--540. {Springer}, 2013.

\bibitem{hormander_symplectic_1995}
L.~H{\"o}rmander.
\newblock Symplectic classification of quadratic forms, and general {{Mehler}}
  formulas.
\newblock {\em Mathematische Zeitschrift}, 219(1):413--449, May 1995.

\bibitem{hu_obstacle_2021}
J.~Hu.
\newblock An {{Obstacle}} for {{Higher Regularity}} of {{Geodesics}} in the
  {{Space}} of {{K{\"a}hler Potentials}}.
\newblock {\em International Mathematics Research Notices},
  2021(15):11493--11513, 2021.

\bibitem{jost_riemannian_2008}
J.~Jost.
\newblock {\em Riemannian Geometry and Geometric Analysis}.
\newblock Universitext. {Springer}, 2008.

\bibitem{lempert_geodesics_2013}
L.~Lempert and L.~Vivas.
\newblock Geodesics in the space of {{K{\"a}hler}} metrics.
\newblock {\em Duke Mathematical Journal}, 162(7), May 2013.

\bibitem{mabuchi_symplectic_1987}
T.~Mabuchi.
\newblock Some symplectic geometry on compact {{K{\"a}hler}} manifolds. {{I}}.
\newblock {\em Osaka journal of mathematics}, 24(2):227--252, 1987.

\bibitem{phong_monge-ampere_2006}
D.~H. Phong and J.~Sturm.
\newblock The {{Monge-Amp{\`e}re}} operator and geodesics in the space of
  {{K{\"a}hler}} potentials.
\newblock {\em Inventiones mathematicae}, 166:125--149, 2005.

\bibitem{phong_test_2007}
D.~H. Phong and J.~Sturm.
\newblock Test {{Configurations}} for {{K-Stability}} and {{Geodesic Rays}}.
\newblock {\em Journal of Symplectic Geometry}, 5(2):247, 2007.

\bibitem{pravda-starov_generalized_2018}
K.~{Pravda-Starov}.
\newblock Generalized {{Mehler}} formula for time-dependent non-selfadjoint
  quadratic operators and propagation of singularities.
\newblock {\em Mathematische Annalen}, 372(3):1335--1382, 2018.

\bibitem{rouby_analytic_2018}
O.~Rouby, J.~Sj{\"o}strand, and S.~V{\~u}~Ng{\d o}c.
\newblock Analytic {{Bergman}} operators in the semiclassical limit.
\newblock {\em Duke Mathematical Journal}, 169(16):3033--3097, 2020.

\bibitem{rubinstein_cauchy_2011}
Y.~A. Rubinstein and S.~Zelditch.
\newblock The {{Cauchy}} problem for the homogeneous
  {{Monge}}{\textendash}{{Amp{\`e}re}} equation, {{II}}. {{Legendre}}
  transform.
\newblock {\em Advances in Mathematics}, 228(6):2989--3025, 2011.

\bibitem{rubinstein_cauchy_2017}
Y.~A. Rubinstein and S.~Zelditch.
\newblock The {{Cauchy}} problem for the homogeneous
  {{Monge}}{\textendash}{{Amp{\`e}re}} equation, {{III}}. {{Lifespan}}.
\newblock {\em Journal f{\"u}r die reine und angewandte Mathematik (Crelles
  Journal)}, 2017(724):105--143, 2017.

\bibitem{semmes_complex_1992}
S.~Semmes.
\newblock Complex {{Monge-Ampere}} and symplectic manifolds.
\newblock {\em American Journal of Mathematics}, pages 495--550, 1992.

\bibitem{sjostrand_singularites_1982}
J.~Sj{\"o}strand.
\newblock {\em Singularites Analytiques Microlocales}, volume~95 of {\em
  Ast{\'e}risque}.
\newblock {Soc. Math. de France}, 1982.

\bibitem{tian_set_1990}
G.~Tian.
\newblock On a set of polarized {{K{\"a}hler}} metrics on algebraic manifolds.
\newblock {\em Journal of Differential Geometry}, 32(1):99--130, 1990.

\bibitem{zelditch_szego_2000}
S.~Zelditch.
\newblock Szeg{\"o} kernels and a theorem of {{Tian}}.
\newblock {\em Int. Math. Research Notices}, 6, 2000.

\bibitem{zelditch_pointwise_2018}
S.~Zelditch and P.~Zhou.
\newblock Pointwise {{Weyl Law}} for {{Partial Bergman Kernels}}.
\newblock In M.~Hitrik, D.~Tamarkin, B.~Tsygan, and S.~Zelditch, editors, {\em
  Algebraic and {{Analytic Microlocal Analysis}}}, Springer {{Proceedings}} in
  {{Mathematics}} \& {{Statistics}}, pages 589--634, {Cham}, 2018. {Springer
  International Publishing}.

\end{thebibliography}
\end{document}